\def\eps{\varepsilon}
\def\e{{\rm e}}
\def\dist{{\rm dist}}
\def\dd{{\rm d}}
\def\ddt{{\frac{\dd}{\dd t}}}
\def\R {\mathbb{R}}
\def\u {\boldsymbol{u}}
\def\N {\mathbb{N}}
\def\AA {{\mathcal A}}
\def\RR {{\mathcal R}}
\def\TT {{\mathbb T}^2}
\DeclareMathOperator*{\esup}{ess\,sup}
\def\de{{\partial}}
\newtheorem{proposition}{Proposition}[section]
\newtheorem{theorem}[proposition]{Theorem}
\newtheorem{corollary}[proposition]{Corollary}
\newtheorem{lemma}[proposition]{Lemma}
\theoremstyle{definition}
\newtheorem{definition}[proposition]{Definition}
\newtheorem{remark}[proposition]{Remark}
\numberwithin{equation}{section}
\title[Uniformly attracting sets for critical SQG]{Uniformly attracting limit sets for the critically dissipative SQG equation}
\author[P.~Constantin]{Peter Constantin}
\address{Department of Mathematics, Princeton University, Princeton, NJ 08544, USA}
\email{const@math.princeton.edu}
\author[M.~Coti Zelati]{Michele Coti Zelati}
\address{Department of Mathematics, University of Maryland, College Park, MD 20742, USA}
\email{micotize@umd.edu}
\author[V.~Vicol]{Vlad Vicol}
\address{Department of Mathematics, Princeton University, Princeton, NJ 08544, USA}
\email{vvicol@math.princeton.edu}
\subjclass[2000]{35Q35}
\keywords{Surface quasi-geostrophic equation, nonlinear maximum principle, De Giorgi, global attractor.}
\begin{document}

\begin{abstract}
We consider the global attractor of the critical SQG semigroup $S(t)$ on 
the scale-invariant space $H^1(\TT)$. It was shown in~\cite{CTV13} that 
this attractor is finite dimensional, and that it attracts uniformly bounded 
sets in $H^{1+\delta}(\TT)$ for any $\delta>0$, leaving open the question 
of uniform attraction in $H^1(\TT)$. In this paper we prove the uniform attraction 
in $H^1(\TT)$, by combining ideas from DeGiorgi iteration and nonlinear maximum principles.
\end{abstract}


\maketitle


\section{Introduction}
We consider the critical surface quasi-geostrophic (SQG) equation 
\begin{equation}\tag{SQG} \label{eq:SQG}
\begin{cases}
\de_t\theta +\u \cdot \nabla \theta+\kappa\Lambda\theta=f\\
\u = \RR^\perp \theta = \nabla^\perp \Lambda^{-1} \theta\\
\theta(0)=\theta_0
\end{cases}
\end{equation}
posed on $\TT=[0,1]^2$, where $\kappa\in (0,1]$ measures the strength of the dissipation,  $\theta_0(x)$ is the initial datum, and $f(x)$ is a 
time-independent force. As usual, $\nabla^\perp= (-\partial_2,\partial_1)$ and $\Lambda = (-\Delta)^{1/2}$ is the Zygmund operator. We assume that the datum and the force have zero mean, i.e. 
$
\int_{\TT} f(x) \dd x = \int_{\TT} \theta_0(x) \dd x=0,
$ 
which immediately implies that \[ \int_{\TT} \theta(x,t) \dd x = 0\] for all $t>0$. 

In this manuscript we consider the dynamical system $S(t)$ generated by \eqref{eq:SQG} 
on the scale-invariant space $H^1(\TT)$. The main result of this paper 
establishes the existence of a compact global attractor for $S(t)$, which uniformly attracts bounded 
set in $H^1(\TT)$. 
\begin{theorem}\label{thm:globalattra}
Let $f\in L^\infty(\TT)\cap H^1(\TT)$. The dynamical system $S(t)$ generated by \eqref{eq:SQG} 
on $H^1(\TT)$ possesses a unique global attractor $\AA$ with the
following properties:
\begin{enumerate}
	\item $S(t)\AA=\AA$ for every $t\geq0$, namely $\AA$ is invariant.
	\item $\AA\subset H^{3/2}(\TT)$, and is thus compact.
	\item For every bounded set $B\subset H^1(\TT)$,
	$$
	\lim_{t\to\infty}\dist(S(t)B,\AA)=0,
	$$
	where $\dist$ stands for the usual Hausdorff semi-distance between sets given by the
	$H^1(\TT)$ norm.
	\item $\AA$ is minimal in the class of $H^1(\TT)$-closed attracting set and 
	maximal in the class of $H^1(\TT)$-bounded invariant sets.
	\item $\AA$ has finite fractal dimension.
\end{enumerate}
\end{theorem}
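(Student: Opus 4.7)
The plan is to reduce the theorem to a single \emph{asymptotic smoothing estimate}: for every $R>0$ there exist $T=T(R)>0$ and $K=K(\|f\|)>0$ such that
$$
\|\theta_0\|_{H^1(\TT)}\leq R \quad \Longrightarrow \quad \sup_{t\geq T}\|S(t)\theta_0\|_{H^{3/2}(\TT)}\leq K.
$$
Once this is in hand, the ball $\B_0=\{\theta\in H^{3/2}(\TT):\|\theta\|_{H^{3/2}}\leq K\}$ is a bounded $H^1$-absorbing set which is compactly embedded in $H^1$, so the abstract machinery of compact dissipative semigroups yields properties (i)--(iv) with $\AA=\bigcap_{s\geq 0}\overline{\bigcup_{t\geq s}S(t)\B_0}^{\,H^1}$. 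Finite fractal dimension (v) then follows from a volume-contraction/squeezing argument on the smooth absorbing set, entirely parallel to the finite-dimensionality proof already carried out in~\cite{CTV13}.

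The asymptotic smoothing estimate will be established in three cascading regularity gains. First, one propagates the $L^2$ energy bound and invokes a nonlinear maximum principle for the evolution of $\|\theta\|_{L^\infty}$, together with the C\'ordoba--C\'ordoba pointwise inequality $\theta\Lambda\theta\geq \tfrac12\Lambda(\theta^2)$, to produce an $L^\infty$-absorbing ball of radius $M=M(\|f\|_{L^\infty},\kappa)$. Second, one runs a De Giorgi iteration past the $L^\infty$-absorbing time to obtain a $C^\alpha$-absorbing set, with H\"older exponent and norm depending only on $M$, $\|f\|_{L^\infty}$, and $\kappa$. Third, with uniform H\"older regularity of the scalar (and hence of the drift $\u=\RR^\perp\theta$) at our disposal, the $H^1$ energy identity
$$
\tfrac12\ddt\|\theta\|_{H^1}^2 + \kappa\|\theta\|_{H^{3/2}}^2 = -\l\u\cdot\nabla\theta,\Lambda^2\theta\r + \l f,\Lambda^2\theta\r
$$
is closed by estimating the commutator $[\Lambda,\u\cdot\nabla]\theta$ in $L^2$ against the H\"older seminorm of $\theta$; this renders the transport term subcritical, allows the dissipation to absorb it, and upon time integration produces the claimed uniform $H^{3/2}$ bound.

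The principal obstacle is the second step. The De Giorgi iteration must be carried out in a genuinely uniform fashion, so that the resulting H\"older constants depend only on the ambient $L^\infty$ absorbing radius $M$ and on $\|f\|_{L^\infty}$, and \emph{not} on the individual initial datum or on its $H^1$ size. Since the drift $\u=\RR^\perp\theta$ inherits exactly the regularity of $\theta$, the iteration has to be performed at the critical level $\u\in L^\infty_{t,x}$, and the oscillation-decay across dyadic space-time cylinders must be summed in a way that makes the post-absorption H\"older estimate intrinsic to $(f,\kappa)$. This is the locus of the ``De Giorgi iteration plus nonlinear maximum principles'' synthesis announced in the abstract, and it is where the bulk of the technical work will lie; steps one and three are comparatively standard once the correct quantifiers on the constants are maintained.
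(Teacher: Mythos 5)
Your overall architecture --- a cascade of absorbing sets $L^2\to L^\infty\to C^\alpha\to H^1\to H^{3/2}$ followed by abstract attractor theory --- is exactly the paper's. The assignment of techniques is swapped: the paper runs the De Giorgi level-set iteration for the $L^2\to L^\infty$ step and a quantitative nonlinear maximum principle (a finite-difference quotient weighted by a time-dependent $\xi(t)$, as in \cite{CZV14}) for the $L^\infty\to C^\alpha$ step, whereas you propose a maximum principle for the first gain and a Caffarelli--Vasseur oscillation-decay argument for the second. Both assignments can in principle be made to work with constants depending only on $\|f\|_{L^\infty}$ and $\kappa$ after absorption, so this is a legitimate alternative; note only that your first step must be a genuine $L^2\to L^\infty$ smoothing estimate (a nonlinear lower bound such as $\Lambda\theta(\bar x)\gtrsim \theta(\bar x)^3/\|\theta\|_{L^2}^2$ at the maximum point, or the level-set iteration), since the plain $L^\infty$ maximum principle \eqref{eq:expdecayLinf} presupposes $\theta_0\in L^\infty$, which fails for general $H^1(\TT)$ data. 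Your third step is also under-specified: the paper closes the $H^1$ estimate not by a commutator bound but by the pointwise nonlinear lower bound $D[\nabla\theta]\gtrsim |\nabla\theta|^{(3-\alpha)/(1-\alpha)}M^{-1/(1-\alpha)}$ of \cite{CV12}, which is what actually converts the $C^\alpha$ information into a subcritical, absorbable term.

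The genuine gap is at the final, abstract stage. You invoke ``the abstract machinery of compact dissipative semigroups'' to obtain (i)--(iv) from the compact absorbing set, but that machinery requires $S(t):H^1\to H^1$ to be continuous for each fixed $t>0$, and this is not known: $H^1$ is critical for \eqref{eq:SQG} and the existence theory provides no continuous dependence in the $H^1$ topology. Without continuity, the $\omega$-limit set of the absorbing ball is still a minimal closed attracting set, but its invariance (property (i)) and its maximality among bounded invariant sets (part of (iv)) do not follow; there are explicit examples of non-invariant minimal attracting sets for non-continuous semigroups. The paper therefore uses the minimal-attractor framework of \cite{CCP12} for existence and attraction, and recovers invariance separately by exploiting that $S(t)$ restricted to the $H^{3/2}$-bounded absorbing set $B_2$ is Lipschitz continuous in $H^1$ (\cite{CTV13}*{Proposition 5.5}), together with an argument showing $\AA\subset S(t_0)\AA$ for some $t_0>0$. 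Your proof needs this (or an equivalent) continuity input to be complete.
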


The study of the long time behavior of hydrodynamics models in terms of finite dimensional global attractors
is closely related to questions regarding the turbulent behavior of viscous fluids, especially
in terms of statistical properties of solutions and invariant measures (see, e.g.~\cites{CF85,CFT85,CFT88,CF88,Hale,FJK96,FMRT01,FHT02,SellYou,T3} and references therein). Several fluids equations have been treated from this point of view, in contexts including 2D turbulence \cites{CF85,CFT85,CFT88,CF88,FHT02,FMRT01,IMT04,JT92,Z97,T3}, the 3D Navier-Stokes equations and regularizations thereof~\cites{Sell96,GT97,CTV07,CZG15 ,CF06,KT09}, and several other
geophysical models~\cites{CT03,FPTZ12,JT15,CD14,CTV13,WT13}, just to mention a few.

Concerning the critical SQG equation, in~\cite{CTV13} the authors have obtained the existence of a finite dimensional invariant attractor 
\[\widetilde \AA \subset H^{3/2}(\TT)\]
such that all point orbits converge to this attractor
$$
\lim_{t\to \infty}\dist(S(t)\theta_0, \widetilde \AA)=0, \qquad \forall \theta_0\in H^1(\TT),
$$
and all bounded sets in a slightly smoother space contract onto it
\begin{align}\label{eq:cptattr}
\lim_{t\to \infty}\dist(S(t)B, \widetilde \AA)=0, \qquad \forall B\subset H^{1+\delta}(\TT),\quad \delta>0.
\end{align}
The question of uniform attraction in $H^1(\TT)$ remained open in~\cite{CTV13}, and is now answered in the positive by 
Theorem~\ref{thm:globalattra}. Moreover, it is in fact not hard to verify that
\begin{align}\label{eq:A:tilde:A}
\AA=\widetilde\AA.
\end{align}
Indeed, since $\AA$ attracts bounded subsets
of $H^1(\TT)$ and $\widetilde\AA$ is invariant, we have
$$
\dist(\widetilde\AA,\AA)=\dist (S(t)\widetilde\AA,\AA)\to 0, \qquad \text{as } t\to \infty,
$$
implying $\widetilde\AA\subset \AA$, since $\AA$ is closed. On the other hand, by the invariance 
of $\AA\subset H^{3/2}(\TT)$ and \eqref{eq:cptattr}, we have
$$
\dist(\AA,\widetilde\AA)=\dist (S(t)\AA,\widetilde\AA)\to 0, \qquad \text{as } t\to \infty,
$$
proving the reverse inclusion  $\AA\subset \widetilde\AA$. Henceforth, equality holds in \eqref{eq:A:tilde:A}.

Comparing Theorem~\ref{thm:globalattra} to the results in~\cite{CTV13}, the new ingredient of this manuscript is to obtain an absorbing ball for the dynamics on $H^1(\TT)$. That is, we prove the existence of a ball 
$
B_a \subset H^1(\TT)
$
such that for any bounded set $B \subset H^1(\TT)$, there exists $t_B\geq 0$ with 
\[
S(t) B \subset B_a
\]
for all $t\geq t_B$. The first difficulty here is that the space $H^1$ is critical, i.e. $\|\cdot \|_{\dot{H}^1}$ is invariant under the natural scaling of \eqref{eq:SQG}, and thus the time of local existence of a solution arising from an initial datum $\theta_0 \in H^1(\TT)$ is not known to depend merely on $\|\theta_0\|_{H^1}$ (rather, it may depend on the rate of decay of the Fourier coefficients, such as the rate at which $|k| |\hat \theta_0(k)| \to 0$ as $|k| \to \infty$). The second difficulty comes from the fact that the Sobolev embedding of $H^1(\TT)$ into $L^\infty(\TT)$ fails, and thus we may not directly consider the evolution of the $L^\infty$ norm of the solution. 

To overcome these difficulties we proceed in three steps:
\begin{enumerate}
\item First, we use the $L^2$ to $L^\infty$ regularization given by the DeGiorgi iteration~\cites{CV10a,CD14,Sil10a} to obtain an $L^\infty$ absorbing set (cf.~Theorem~\ref{thm:absLinf}), with entry time that depends only on $\|\theta_0\|_{L^2}$ and on $\|f\|_{L^2\cap L^\infty}$  (cf.~Theorem~\ref{thm:Linfest}). 
\item Second, we use a quantitative $L^\infty$ to $C^\alpha$ regularization~\cite{CZV14} via nonlinear maximum principles~\cite{CV12} to obtain a $C^\alpha$ absorbing set (cf.~Theorem \ref{thm:absLinf}) with entry time that depends only on $\|\theta_0\|_{L^\infty}$  (the solution already lies inside the $L^\infty$ absorbing set) and on $\|f\|_{L^\infty}$ (cf.~Theorem~\ref{thm:Calphaest}). 
\item Lastly, we use~\cite{CTV13} to show the existence of an $H^1$ absorbing set (cf.~Theorem~\ref{thm:H1abs}) with entry time that depends only on $\|\theta_0\|_{C^\alpha}$ (the solution already lies in the $C^\alpha$ absorbing set) and on $\|f\|_{L^\infty \cap H^1}$.
\end{enumerate}
The existence of the global attractor then follows from the $H^{3/2}$ absorbing ball estimate obtained in~\cite{CTV13}. The remainder of the properties (i)--(v) stated in Theorem~\ref{thm:globalattra} follow along the lines of \cites{CFT85,Hale,PZ,SellYou,T3}, as summarized in Section~\ref{sec:globattr} below.

Lastly, we note that recently in~\cite{CD14} the authors have shown that the dynamics of weak $L^2(\TT)$ solutions to \eqref{eq:SQG} possesses a strong global attractor $\AA_{L^2}$, which is a compact subset in $L^2(\TT)$. The proof in~\cite{CD14} uses the DeGiorgi regularization ideas of~\cite{CV10a}, the weak continuity property of the nonlinearity in \eqref{eq:SQG} for $L^\infty$ weak solutions (which may be established along the lines of~\cites{CCFS,CTV12}), and the compactness argument of~\cite{C09}. As noted in~\cite{CD14}, we have that $\AA \subset \AA_{L^2}$, but it is not clear whether the two attractors coincide, which remains an interesting open problem.


\section{The dynamical system generated by SQG}

We recall the following well-posedness result which summarizes the local in time existence and regularization results of~\cites{CCW00,CC04,Dong10,Ju07,Miu06,Wu07} and the global in time regularity established in~\cites{CV10a,CTV13,CV12,FPV09,KN09,KNV07}:
\begin{proposition}\label{prop:WP}
Assume that {$f\in L^\infty\cap H^1$}. Then, for all initial data $\theta_0\in H^1$ the initial value problem \eqref{eq:SQG}
admits a unique  global solution
\[
\theta\in C([0,\infty);H^1)\cap L^2_{loc}(0,\infty;H^{3/2}).
\]
Moreover, $\theta$ satisfies the energy inequality
\begin{equation}\label{eq:energyin}
\| \theta(t)\|^2_{L^2}+\kappa\int_0^t \|\Lambda^{1/2}\theta(s)\|_{L^2}^2\dd s\leq \|\theta_0\|^2_{L^2}
+\frac{1}{c_0\kappa}\|f\|^2_{L^2}t, \qquad \forall t\geq 0.
\end{equation}
and the decay estimate
\begin{equation}\label{eq:expdecayL2}
\|\theta(t)\|_{L^2}\leq \|\theta_0\|_{L^2}\e^{-c_0\kappa t}+\frac{1}{c_0\kappa}\|f\|_{L^2}, \qquad \forall t\geq 0,
\end{equation}
where $c_0>0$ is a universal constant. If furthermore $\theta_0\in L^\infty$, then cf.~\cites{CC04,CTV13} we have
\begin{equation}\label{eq:expdecayLinf}
\|\theta(t)\|_{L^\infty}\leq \|\theta_0\|_{L^\infty}\e^{-c_0\kappa t}+\frac{1}{c_0\kappa}\|f\|_{L^\infty} , \qquad \forall t\geq 0.
\end{equation}
\end{proposition}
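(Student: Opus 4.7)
The plan is to package existing local well-posedness and global regularity results for the critical SQG equation together with three short a priori estimates for~\eqref{eq:energyin}, \eqref{eq:expdecayL2}, and~\eqref{eq:expdecayLinf}. The existence of a unique local solution $\theta \in C([0,T_\ast);H^1)\cap L^2(0,T_\ast;H^{3/2})$ starting from $\theta_0\in H^1$ I would import directly from~\cites{CCW00,CC04,Dong10,Ju07,Miu06,Wu07}, where such a theory is developed via standard parabolic energy/commutator estimates on Galerkin approximants. To upgrade to a global solution, I would invoke the global regularity results of~\cites{CV10a,CTV13,CV12,FPV09,KN09,KNV07}: once one has an $H^1$ local solution with $f\in L^\infty\cap H^1$, one applies the $L^2\to L^\infty$ De~Giorgi regularization of~\cite{CV10a}, then the $L^\infty\to C^\alpha$ bound via the nonlinear maximum principles of~\cites{CV12,CZV14}, and finally the parabolic bootstrap of~\cite{CTV13} which passes from $C^\alpha$ up to $H^{3/2}$. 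This chain shows that no $H^{3/2}$ norm blows up in finite time, so the local solution extends to $[0,\infty)$; the continuity $\theta\in C([0,\infty);H^1)$ is then an easy consequence of the energy identity and standard time-continuity arguments.

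For the energy inequality~\eqref{eq:energyin}, I would test the equation against $\theta$, use that $\u=\RR^\perp\theta$ is divergence free (so the transport term vanishes) and that $\Lambda$ is self-adjoint, obtaining
\begin{equation*}
\frac{1}{2}\ddt\|\theta\|_{L^2}^2+\kappa\|\Lambda^{1/2}\theta\|_{L^2}^2=\int_{\TT}f\theta\,\dd x.
\end{equation*}
Since $\theta$ has zero mean on $\TT$, the Poincaré-type inequality $\|\theta\|_{L^2}\leq c_0^{-1/2}\|\Lambda^{1/2}\theta\|_{L^2}$ holds for a universal $c_0>0$. Combining Cauchy--Schwarz with Young's inequality absorbs half of the dissipation and leaves
\begin{equation*}
\ddt\|\theta\|_{L^2}^2+\kappa\|\Lambda^{1/2}\theta\|_{L^2}^2 \leq \frac{1}{c_0\kappa}\|f\|_{L^2}^2,
\end{equation*}
which upon integration in time yields~\eqref{eq:energyin}. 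For the decay~\eqref{eq:expdecayL2}, I would instead keep $\int f\theta\leq\|f\|_{L^2}\|\theta\|_{L^2}$ and use $2\kappa\|\Lambda^{1/2}\theta\|_{L^2}^2\geq 2c_0\kappa\|\theta\|_{L^2}^2$ to obtain the linear ODE $\ddt\|\theta\|_{L^2}+c_0\kappa\|\theta\|_{L^2}\leq\|f\|_{L^2}$, whose integration via Duhamel's formula gives the stated inequality with steady state $\|f\|_{L^2}/(c_0\kappa)$.

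For the $L^\infty$ bound~\eqref{eq:expdecayLinf}, I would run a maximum-principle argument in the spirit of Córdoba--Córdoba~\cite{CC04}. At a point $x^\ast(t)$ where $|\theta(\cdot,t)|$ attains its maximum $M(t)=\|\theta(t)\|_{L^\infty}$, one has $\nabla\theta(x^\ast)=0$ so the transport term vanishes, and the pointwise lower bound $\Lambda\theta(x^\ast)\geq c_0 M(t)$ holds; this last inequality is the zero-mean periodic version of the nonlinear maximum principle, exploiting that the singular integral representation of $\Lambda$ has nonnegative kernel and that the spatial average of $\theta$ vanishes. Evaluating the equation at $x^\ast$ and using $|f(x^\ast)|\leq\|f\|_{L^\infty}$ yields the Lipschitz-in-time differential inequality
\begin{equation*}
\ddt\|\theta\|_{L^\infty}+c_0\kappa\|\theta\|_{L^\infty}\leq\|f\|_{L^\infty},
\end{equation*}
and Gronwall's lemma delivers~\eqref{eq:expdecayLinf}. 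The only technical subtlety is justifying the differential inequality for the envelope $M(t)$ when the maximum is not differentiable in $t$; this is standard and can be handled either via Rademacher's theorem applied to the Lipschitz map $t\mapsto M(t)$ or by a regularization--passage-to-the-limit argument on smooth approximants, which I would only briefly indicate.
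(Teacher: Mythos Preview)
Your proposal is correct, and in fact more detailed than what the paper provides: the paper does not prove Proposition~\ref{prop:WP} at all, but simply states it as a summary of known results, with the local existence attributed to~\cites{CCW00,CC04,Dong10,Ju07,Miu06,Wu07}, the global regularity to~\cites{CV10a,CTV13,CV12,FPV09,KN09,KNV07}, and the $L^\infty$ decay~\eqref{eq:expdecayLinf} to~\cites{CC04,CTV13}. Your sketch of the energy identity, the Poincar\'e/Gronwall argument for~\eqref{eq:expdecayL2}, and the C\'ordoba--C\'ordoba maximum principle for~\eqref{eq:expdecayLinf} are exactly the standard computations underlying those references, so you are essentially filling in the details the paper omits rather than taking a different route.
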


Proposition \ref{prop:WP} translates into the existence of
the solution operators
\[
S(t):H^1\to H^1
\]
acting as
\[
\theta_0\mapsto S(t)\theta_0=\theta(t), \qquad  \forall t\geq0.
\]
Since the forcing term $f$ is time independent, the family
$S(t)$ fulfills the semigroup property
\[
S(t+\tau)=S(t)S(\tau), \qquad \forall t,\tau\geq0.
\]
However, no continuous dependence estimate in $H^1$ is available, since the existence of solutions has been obtained as a stability result of the equation posed in $H^{1+\delta}$ (see \cites{Ju07,Miu06} for details). Consequently, it is not clear whether $S(t):H^1\to H^1$ is continuous in the $H^1$ topology for each fixed $t>0$. Along the lines of the classical references \cites{CFT85,Hale, SellYou, T3}, the theory of infinite-dimensional
dynamical systems has been adapted to more general classes of operators in recent
years (see \cites{C09,CCP12, CZ13, PZ}). It turns out that continuity for fixed $t>0$ is only needed to prove
invariance of suitable attracting sets, while their existence holds under no continuity assumptions 
on $S(t)$. We shall however see in  Section \ref{sec:globattr} that invariance of the attractor may be nonetheless recovered. We recall that: 
\begin{definition}
A set $B_{a} \subset H^1$ is said to be an absorbing set for the semigroup $S(t)$ on $H^1$ if for every bounded set $B\subset H^1$ 
there exists an entering time $t_B\geq 0$ (depending only $B$) such that
\[
S(t)B\subset B_{a}
\]
for all $t\geq t_B$.
\end{definition}
The absorbing set, besides giving a first rough estimate of the dissipativity of the system, is the crucial
preliminary step needed to prove the existence of the global attractor. In particular, a \emph{sufficient}
condition for the existence of the global attractor is the existence of a
\emph{compact} absorbing set.

\section{De Giorgi iteration yields an \texorpdfstring{$L^\infty$}{L infinity} absorbing set}

The first step towards the proof of the existence of a regular uniformly absorbing set
consists in showing that the dynamics can be restricted to uniformly bounded solutions. To put it
in different words, we aim to show the existence of an absorbing set $B_\infty\subset L^\infty \cap H^1$.

\begin{theorem}\label{thm:absLinf}
There exists $c_0>0$ a universal constant, such that the set
\begin{equation*}
B_{\infty}=\left\{\varphi\in L^\infty\cap H^1: \|\varphi\|_{L^\infty}\leq \frac{2}{c_0\kappa}\|f\|_{L^\infty}\right\}
\end{equation*}
is an absorbing set for $S(t)$. Moreover, 
\begin{equation}\label{eq:estimateLinf}
\sup_{t\geq 0}\sup_{\theta_0\in B_\infty}\|S(t)\theta_0\|_{L^\infty}\leq \frac{3}{c_0\kappa}\|f\|_{L^\infty}.
\end{equation}
\end{theorem}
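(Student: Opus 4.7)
The plan is to reduce the theorem to the two exponential decay estimates already contained in Proposition \ref{prop:WP} --- \eqref{eq:expdecayL2} in $L^2$ and \eqref{eq:expdecayLinf} in $L^\infty$ --- and to bridge them via an $L^2\to L^\infty$ regularization supplied by a DeGiorgi truncation iteration. The bound \eqref{eq:estimateLinf} is in fact almost immediate: for $\theta_0\in B_\infty$ one has $\|\theta_0\|_{L^\infty}\leq \frac{2}{c_0\kappa}\|f\|_{L^\infty}$, so \eqref{eq:expdecayLinf} yields
\begin{equation*}
\|S(t)\theta_0\|_{L^\infty}\leq \frac{2}{c_0\kappa}\|f\|_{L^\infty}\e^{-c_0\kappa t}+\frac{1}{c_0\kappa}\|f\|_{L^\infty}\leq \frac{3}{c_0\kappa}\|f\|_{L^\infty}
\end{equation*}
uniformly in $t\geq 0$. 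The remaining work is to show that $B_\infty$ absorbs bounded subsets of $H^1$.

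Fix a bounded $B\subset H^1$ and set $R:=\sup_{\theta_0\in B}\|\theta_0\|_{L^2}<\infty$. By \eqref{eq:expdecayL2} there is a time $t_1=t_1(R,\kappa,\|f\|_{L^2})$ such that $\|S(t)\theta_0\|_{L^2}\leq \frac{2}{c_0\kappa}\|f\|_{L^2}$ for every $\theta_0\in B$ and every $t\geq t_1$. Symmetrically, once a pointwise bound $\|S(t_\ast)\theta_0\|_{L^\infty}\leq M$ has been produced at some later instant $t_\ast$, \eqref{eq:expdecayLinf} shifted to initial time $t_\ast$ gives
\begin{equation*}
\|S(t)\theta_0\|_{L^\infty}\leq M\e^{-c_0\kappa(t-t_\ast)}+\frac{1}{c_0\kappa}\|f\|_{L^\infty},
\end{equation*}
so choosing $t_B\geq t_\ast+(c_0\kappa)^{-1}\log(c_0\kappa M/\|f\|_{L^\infty})$ places $S(t)\theta_0$ inside $B_\infty$ for all $t\geq t_B$. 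Thus the crux is to produce $t_\ast$ together with an $L^\infty$ bound $M$ depending only on $\kappa$ and $\|f\|_{L^2\cap L^\infty}$.

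To that end, starting at $t_1$ where the $L^2$ norm is already controlled by $\|f\|_{L^2}/(c_0\kappa)$, I would run a DeGiorgi truncation iteration. Pick levels $L_n=L(1-2^{-n})$ with $L>0$ to be determined, and set $\theta_n=(\theta-L_n)_+$. Pairing \eqref{eq:SQG} with $\theta_n$, the transport term vanishes because $\nabla\cdot\u=0$, the fractional dissipation contributes $\kappa\|\Lambda^{1/2}\theta_n\|_{L^2}^2$ via the C\'ordoba--C\'ordoba inequality $\int \Lambda\theta\cdot\theta_n\,\dd x \geq \int \theta_n\,\Lambda\theta_n\,\dd x$, and the forcing is bounded using $|f|\leq\|f\|_{L^\infty}$ on the set $\{\theta>L_n\}$. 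Combined with the Chebyshev estimate $|\{\theta>L_n\}|\leq 4^n L^{-2}\|\theta_{n-1}\|_{L^2}^2$ and the fractional Sobolev embedding $H^{1/2}(\TT)\hookrightarrow L^p(\TT)$ for some $p>2$, this yields a nonlinear recursion of the form $E_{n+1}\leq C^n L^{-\eta}E_n^{1+\eta}$ on the natural energies $E_n$ of $\theta_n$. The recursion contracts to $0$ provided $L\geq L_0(\kappa,\|f\|_{L^2\cap L^\infty})$, so after an additional time $t_2$ depending only on the same data one obtains $(\theta-L)_+\equiv 0$ pointwise; the symmetric iteration on $(-\theta-L)_+$ supplies the matching lower bound, so $\|S(t)\theta_0\|_{L^\infty}\leq L$ for all $t\geq t_\ast:=t_1+t_2$.

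The main obstacle is exactly this middle step. Since $H^1(\TT)$ is the scale-invariant space for \eqref{eq:SQG}, the energy inequality generated by the truncation sits at a critical level and admits no direct Sobolev gain; one must exploit the nonlocal dissipation $\Lambda^{1/2}$ through the C\'ordoba--C\'ordoba inequality (there being no pointwise chain rule for $\Lambda$), and carefully track the dependence of the threshold $L_0$ on $\|f\|_{L^2\cap L^\infty}$ alone --- a quantitative content that is precisely what was announced as Theorem \ref{thm:Linfest} in the introduction. With that step secured, the three stages assemble cleanly to give both the absorbing property of $B_\infty$ and the uniform bound \eqref{eq:estimateLinf}.
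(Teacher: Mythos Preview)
Your proposal is correct and follows essentially the same route as the paper: the uniform bound \eqref{eq:estimateLinf} is read off directly from \eqref{eq:expdecayLinf}, and the absorbing property is obtained by combining a DeGiorgi truncation iteration (C\'ordoba--C\'ordoba pointwise inequality, $H^{1/2}\hookrightarrow L^4$, nonlinear recursion on level-set energies) with the $L^\infty$ decay \eqref{eq:expdecayLinf}. The only cosmetic difference is the order of operations: the paper runs the DeGiorgi iteration immediately on $[0,1]$ with $\|\theta_0\|_{L^2}$ appearing in the threshold $M$, and then lets the $L^\infty$ decay absorb that dependence into the exponential term of \eqref{eq:linfty}; you instead first wait a time $t_1$ for the $L^2$ norm to relax via \eqref{eq:expdecayL2}, so that your DeGiorgi threshold $L_0$ depends only on $\|f\|_{L^2\cap L^\infty}$, and then decay in $L^\infty$. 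Both orderings yield an entering time depending only on $\|B\|_{H^1}$ and $\|f\|_{L^2\cap L^\infty}$, so there is no substantive gap.
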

The above theorem is a consequence of Theorem \ref{thm:Linfest} below.
It is worth noticing that at this stage $B_{\infty}$ is an unbounded in $H^1$.

The main result of this section is an $L^\infty$ estimate on the solutions to \eqref{eq:SQG} based
on a De Giorgi type iteration procedure and standard a priori estimates.
The proof closely follows that of \cite{CV10a} and \cite{CD14}.

\begin{theorem}\label{thm:Linfest}
Let $\theta(t)$ be the solution to \eqref{eq:SQG} with initial datum $\theta_0\in H^1$. Then 
\begin{equation}\label{eq:linfty}
\| \theta(t)\|_{L^\infty}\leq  \frac{c}{\kappa}\left[\|\theta_0\|_{L^2}+\frac{1}{\kappa^{1/2}}\|f\|_{L^2}\right]\e^{-c_0\kappa t}+\frac{1}{c_0\kappa}\|f\|_{L^\infty}
\end{equation}
for all $t \geq 1$, for some constant $c>0$.
\end{theorem}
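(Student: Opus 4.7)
The strategy is to prove a De Giorgi-type $L^2 \to L^\infty$ regularization estimate on each time window $[t_*-1,t_*]$ with $t_* \geq 1$, of the form
$$ \|\theta(t_*)\|_{L^\infty} \lesssim \frac{1}{\kappa}\Big(\sup_{s \in [t_*-1,t_*]} \|\theta(s)\|_{L^2} + \|f\|_{L^2}\Big) + \frac{1}{c_0\kappa}\|f\|_{L^\infty}, $$
and then to combine this with the exponential $L^2$ decay \eqref{eq:expdecayL2} of Proposition \ref{prop:WP}, applied on $[0,t_*-1]$, to produce the final bound. Applying the same argument to $-\theta$ yields the two-sided $L^\infty$ control. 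The restriction $t \ge 1$ enters because one unit of time is needed to run the iteration; no spatial rescaling is required since $\TT$ already has unit scale.

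To carry out the De Giorgi iteration, I follow \cite{CV10a,CD14}: fix $t_* \ge 1$, pick increasing truncations $M_k = M(1-2^{-k-1})$ and time levels $T_k = t_* - \tfrac12 - 2^{-k-2}$ (so $T_0 = t_*-\tfrac34$ and $T_k \nearrow t_* - \tfrac12$), and set $v_k = (\theta - M_k)_+$. Testing \eqref{eq:SQG} against $v_k$, the transport term vanishes because $\u = \RR^\perp\theta$ is divergence free, and the C\'ordoba--C\'ordoba inequality $\int v_k \Lambda \theta \, \dd x \ge \|\Lambda^{1/2} v_k\|_{L^2}^2$ controls the dissipation from below, producing
$$ \frac{1}{2}\ddt \|v_k\|_{L^2}^2 + \kappa \|\Lambda^{1/2} v_k\|_{L^2}^2 \leq \|f\|_{L^\infty}\|v_k\|_{L^1}. $$
Defining the level energies
$$ E_k = \sup_{t \in [T_k,t_*]}\|v_k(t)\|_{L^2}^2 + \kappa \int_{T_k}^{t_*}\|\Lambda^{1/2}v_k\|_{L^2}^2 \, \dd s, $$
I integrate the above inequality from a suitably chosen $s_* \in [T_k,T_{k+1}]$ with $\|v_k(s_*)\|_{L^2}^2 \leq 2^{k+2} \int_{T_k}^{T_{k+1}}\|v_k\|_{L^2}^2\, \dd s$. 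Combining the Sobolev embedding $H^{1/2}(\TT) \hookrightarrow L^4$ to upgrade $\int \|v_{k+1}\|_{L^2}^2$ to a spacetime $L^4$ bound, with the Chebyshev estimate $|\{v_{k+1} > 0\}| \leq 4^{k+2} M^{-2}\|v_k\|_{L^2}^2$ for the measure of the level set, and treating the forcing term $\|f\|_{L^\infty}\int \|v_{k+1}\|_{L^1}$ by H\"older and the same level-set bound, produces a nonlinear recursion
$$ E_{k+1} \leq \frac{C^{k}}{\kappa\, M^{\beta}}\, E_k^{1+\eta} $$
for some $\eta, \beta > 0$ depending only on the Sobolev exponent.

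Then, provided $M$ is chosen of the order $M \sim \kappa^{-1}(E_0^{1/2} + \kappa^{-1/2}\|f\|_{L^\infty})$, standard De Giorgi summation gives $E_k \to 0$, hence $(\theta(t_*) - M)_+ \equiv 0$, i.e.\ $\theta(t_*) \leq M$. Finally, $E_0$ is bounded by $\sup_{s \in [t_*-1,t_*]}\|\theta(s)\|_{L^2}^2$ which is controlled via \eqref{eq:expdecayL2}, and unpacking $M$ yields the desired structure of \eqref{eq:linfty}. The main obstacle is calibrating the threshold $M$ correctly: it must simultaneously absorb the linear forcing contribution (producing the steady $\|f\|_{L^\infty}/(c_0\kappa)$ piece) and be large enough relative to the base energy $E_0$ for the nonlinear iteration to close, while still being of the correct order in $\kappa$ to match the stated scaling. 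A secondary subtlety is splitting the final bound cleanly into an exponentially decaying part (inherited from the $L^2$ decay of the initial data together with the $\kappa^{-1/2}\|f\|_{L^2}$ forcing contribution on the decaying time window) and the steady $L^\infty$ floor from the forcing, which is handled by writing $M = M_{\mathrm{decay}} + M_{\mathrm{forcing}}$ at the very end.
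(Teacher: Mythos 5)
Your De Giorgi mechanics (truncation levels, time cutoffs, the level-set energy inequality, Chebyshev together with the embedding $H^{1/2}\hookrightarrow L^4$, and the resulting nonlinear recursion with threshold $M\gtrsim \kappa^{-1}E_0^{1/2}$) match the paper's and are sound. The genuine gap is in how you produce the decaying/persistent split of \eqref{eq:linfty}. You run the iteration on every window $[t_*-1,t_*]$ and feed in the $L^2$ decay \eqref{eq:expdecayL2}; but for large $t_*$ the base energy $E_0$ on that window does not tend to zero --- it saturates at order $\frac{1}{(c_0\kappa)^2}\|f\|^2_{L^2}$ --- so the admissible threshold $M$ on late windows is bounded below by roughly $\frac{c}{\kappa^2}\|f\|_{L^2}$ (and your calibration $M\sim \kappa^{-3/2}\|f\|_{L^\infty}$ adds a further persistent term of that size). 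Consequently your route yields a \emph{permanent} floor of order $\kappa^{-2}\|f\|_{L^2}+\kappa^{-3/2}\|f\|_{L^\infty}$, whereas \eqref{eq:linfty} asserts that the only term surviving as $t\to\infty$ is exactly $\frac{1}{c_0\kappa}\|f\|_{L^\infty}$, with the entire bracket $\|\theta_0\|_{L^2}+\kappa^{-1/2}\|f\|_{L^2}$ multiplied by $\e^{-c_0\kappa t}$. Writing $M=M_{\mathrm{decay}}+M_{\mathrm{forcing}}$ cannot repair this, because the part of $M$ forced by the non-decaying portion of $E_0$ neither decays nor has the right size and $\kappa$-power to be absorbed into $\frac{1}{c_0\kappa}\|f\|_{L^\infty}$. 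This matters downstream: the radius $\frac{2}{c_0\kappa}\|f\|_{L^\infty}$ of the absorbing set in Theorem~\ref{thm:absLinf}, and the uniform bound \eqref{eq:estimateLinf}, are read off from the precise floor in \eqref{eq:linfty}.

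The missing ingredient is the $L^\infty$ maximum-principle decay \eqref{eq:expdecayLinf}. The paper runs De Giorgi only once, on $[0,1]$ with cutoffs $\tau_k=\frac12(1-2^{-k})$, to conclude $\|\theta(\tau)\|_{L^\infty}\le \frac{c}{\kappa}\bigl[\|\theta_0\|_{L^2}+\kappa^{-1/2}\|f\|_{L^2}\bigr]$ for a.e.\ $\tau\in(1/2,1)$, the base energy $Q_0$ being controlled via \eqref{eq:energyin} on $[0,1]$. From such a time $\tau$ onward it invokes \eqref{eq:expdecayLinf}, which damps the De Giorgi output exponentially and contributes exactly the floor $\frac{1}{c_0\kappa}\|f\|_{L^\infty}$. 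That is the step that places the whole $L^2$-based quantity under the factor $\e^{-c_0\kappa t}$; without it your argument proves a strictly weaker inequality than the one stated.
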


\begin{proof}[Proof of Theorem~\ref{thm:Linfest}]
We split the proof in two steps: first, we prove that $\theta(\tau)\in L^\infty$ for almost every $\tau\in(1/2,1)$, and then we will exploit
\eqref{eq:expdecayLinf} to conclude the proof.

For $M\geq 2\|f\|_{L^\infty}$ to be fixed later, we denote by $\eta_k$ the levels
$$
\eta_k=M(1-2^{-k})
$$
and  by $\theta_k$ the truncated function
$$
\theta_k(t)=(\theta(t)-\eta_k)_+ = \max\{ \theta(t) - \eta_k,0\}.
$$
Define also the time cutoffs 
\begin{equation}\label{eq:timecut}
\tau_k=\frac12(1-2^{-k}).
\end{equation}
As observed in \cites{CV10a, CD14}, in view of pointwise inequality of \cite{CC04}, the level set inequality
\begin{equation*}
\|\theta_k(t_2)\|^2_{L^2}+ 2\kappa \int_{t_1}^{t_2}\|\Lambda^{1/2}\theta_k(\tau)\|^2_{L^2}\dd \tau\leq \|\theta_k(t_1)\|^2_{L^2}+
2\|f\|_{L^\infty}\int_{t_1}^{t_2}\|\theta_k(\tau)\|_{L^1}\dd \tau
\end{equation*}
holds for any $t_2\geq t_1\geq 0$. Taking $t_1=s\in (\tau_{k-1},\tau_k)$ and $t_2=t\in(\tau_k,1]$, we then obtain
\begin{equation*}
\sup_{t\in[\tau_k,1]}\|\theta_k(t)\|^2_{L^2}+ 2\kappa \int_{\tau_k}^{1}\|\Lambda^{1/2}\theta_k(\tau)\|^2_{L^2}\dd \tau\leq \|\theta_k(s)\|^2_{L^2}+
2\|f\|_{L^\infty}\int_{\tau_{k-1}}^{1}\|\theta_k(\tau)\|_{L^1}\dd \tau.
\end{equation*}
Upon averaging over $s\in (\tau_{k-1},\tau_k)$, it follows that the quantity
$$
Q_k= \sup_{t\in[\tau_k,1]} \|\theta_k(t)\|^2_{L^2}+ 2\kappa \int_{\tau_k}^{1}\|\Lambda^{1/2}\theta_k(t)\|^2_{L^2}\dd t,
$$
obeys the inequality
\begin{equation}\label{eq:iter1}
Q_k\leq 2^{k}\int_{\tau_{k-1}}^{1}\|\theta_k(s)\|^2_{L^2}\dd s+2\|f\|_{L^\infty}\int_{\tau_{k-1}}^{1}\|\theta_k(t)\|_{L^1}\dd t.
\end{equation}
for all $k\in \N$, Moreover, due to \eqref{eq:energyin}, we also have
\begin{equation}\label{eq:iter00}
Q_0\leq \|\theta_0\|^2_{L^2}+\frac{1}{c_0\kappa}\|f\|^2_{L^2}.
\end{equation}
We now bound the right hand side by a power of $Q_{k-1}$.
By the H\"older inequality and the Sobolev embedding $H^{1/2}\subset L^4$, it is not hard to see that
\begin{equation}\label{eq:L3}
\|\theta_\ell\|_{L^3(\TT\times[\tau_\ell,1])}^2\leq \frac{c}{\kappa^{2/3}}Q_\ell, \qquad \forall \ell\in \N.
\end{equation}
Since 
$$
\theta_{k-1}\geq 2^{-k}M, \quad  \mbox{on the set}\quad \{(x,t):\,\theta_k(x,t)>0\},
$$
we deduce that
$$
\mathds{1}_{\{\theta_k>0\}}\leq \frac{2^k}{M}\theta_{k-1}.
$$
Using the fact that $\theta_k\leq\theta_{k-1}$ and that the bound \eqref{eq:L3} holds, we infer that
\begin{align*}
2^{k}\int_{\tau_{k-1}}^{1}\|\theta_k(s)\|^2_{L^2}\dd s 
&\leq   2^{k}\int_{\tau_{k-1}}^{1}\int_{\TT}\theta^2_{k-1}(x,s) \mathds{1}_{\{\theta_k>0\}}\dd x\,\dd s\\
&\leq   \frac{2^{2k}}{M}\int_{\tau_{k-1}}^{1}\int_{\TT}\theta^3_{k-1}(x,s) \dd x\,\dd s \leq  c\frac{2^{2k}}{M\kappa }Q_{k-1}^{3/2},
\end{align*}
and similarly,
\begin{align*}
\int_{\tau_{k-1}}^{1}\|\theta_k(t)\|_{L^1}\dd t &\leq  \int_{\tau_{k-1}}^{1}\int_{\TT}\theta_{k-1}(x,s) \mathds{1}^2_{\{\theta_k>0\}}\dd x\,\dd s\\
&\leq   \frac{2^{2k}}{M^2}\int_{\tau_{k-1}}^{1}\int_{\TT}\theta^3_{k-1}(x,s) \dd x\,\dd s \leq  c\frac{2^{2k}}{M^2 \kappa}Q_{k-1}^{3/2}.
\end{align*}
From \eqref{eq:iter1}, the above estimates and the fact that $M\geq 2\|f\|_{L^\infty}$, it follows that
\begin{equation}\label{eq:iter2}
Q_k\leq  c\frac{2^{2k}}{M\kappa }Q_{k-1}^{3/2}.
\end{equation}
Hence, if we ensure
$$
M\geq \frac{c}{\kappa}\sqrt{Q_0},
$$
then $Q_k\to 0$ as $k\to \infty$. In light of \eqref{eq:iter00}, the above constraint is in particular satisfied if
\begin{equation}\label{eq:iter3}
M\geq \frac{c}{\kappa}\left[\|\theta_0\|_{L^2}+\frac{1}{\kappa^{1/2}}\|f\|_{L^2}\right].
\end{equation}
This implies that $\theta$ is bounded above by $M$. Applying the same argument to $-\theta$, we
infer the bound
\begin{equation*}
\|\theta(\tau)\|_{L^\infty}\leq \frac{c}{\kappa}\left[\|\theta_0\|_{L^2}+\frac{1}{\kappa^{1/2}}\|f\|_{L^2}\right], \qquad \text{a.e.}\ \tau\in (1/2,1).
\end{equation*}
Once $\theta(\tau)\in L^\infty$ for some $\tau\in (1/2,1)$, we can exploit the decay estimate 
 \eqref{eq:expdecayLinf} to deduce the uniform bound \eqref{eq:linfty}, thereby concluding
 the proof.
\end{proof}
\begin{proof}[Proof of Theorem~\ref{thm:absLinf}]
Define
\begin{equation*}
B_{\infty}=\left\{\varphi\in L^\infty\cap H^1: \|\varphi\|_{L^\infty}\leq \frac{2}{c_0\kappa}\|f\|_{L^\infty}\right\}.
\end{equation*}
For a fixed bounded set $B\subset H^1$ let 
$$
R=\|B\|_{H^1}=\sup_{\varphi\in B}\|\varphi\|_{H^1}.
$$
Thanks to \eqref{eq:linfty} and the Poincar\'e inequality,
we deduce that if $\theta_0\in B$ then
\begin{equation*}
\| S(t)\theta_0\|_{L^\infty}\leq  \frac{c}{\kappa}\left[R+\frac{1}{\kappa^{1/2}}\|f\|_{L^2}\right]\e^{-c_0\kappa t}+\frac{1}{c_0\kappa}\|f\|_{L^\infty} , \qquad \forall t\geq 1.
\end{equation*}
Define the entering time $t_B = t_B(R,\|f\|_{L^2 \cap L^\infty})\geq 1$ so that
$$
\frac{c}{\kappa}\left[R+\frac{1}{\kappa^{1/2}}\|f\|_{L^2}\right]\e^{-c_0\kappa t_B}\leq\frac{1}{c_0\kappa}\|f\|_{L^\infty},
$$
for which we see that $S(t)B\subset B_\infty$ for all $t\geq t_B$. Thus $B_\infty$ is absorbing, and 
Theorem \ref{thm:absLinf} is proven.
\end{proof}

\begin{remark}\label{rmk:L2toLinf}
If we replace the time cutoffs in \eqref{eq:timecut} with 
$$
\tau_k=t_0 (1-2^{-k}), \qquad t_0\in (0,1),
$$
it follows that the solution regularizes from $L^2$ to $L^\infty$ instantaneously.
\end{remark}

\section{Nonlinear lower bounds yield  H\"older absorbing sets}
We devote this section to the improvement of the regularity of absorbing sets, namely 
from $L^\infty$ to $C^\alpha$, for $\alpha\in (0,1)$ small enough depending on $B_\infty$.

\begin{theorem}\label{thm:Calpha}
There exists 
$\alpha=\alpha(\|f\|_{L^\infty},\kappa)\in (0,1/4]$ and a constant $c_1\geq 1$
such that the set
\begin{equation*}
B_{\alpha}=\left\{\varphi\in C^\alpha\cap H^1: \|\varphi\|_{C^\alpha}\leq \frac{c_1}{\kappa}\|f\|_{L^\infty}\right\}
\end{equation*}
is an absorbing set for $S(t)$. Moreover, 
\begin{equation}\label{eq:Calphaunif}
\sup_{t\geq 0}\sup_{\theta_0\in B_\alpha}\|S(t)\theta_0\|_{C^\alpha}\leq \frac{2 c_1}{\kappa}\|f\|_{L^\infty},
\end{equation}
holds.
\end{theorem}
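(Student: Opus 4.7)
The plan is to prove Theorem~\ref{thm:Calpha} by the same two-step scheme used for Theorem~\ref{thm:absLinf}. The analytic heart is a quantitative $L^\infty \to C^\alpha$ regularization estimate, the companion Theorem~\ref{thm:Calphaest}, of the form
\begin{equation*}
\|\theta(t)\|_{C^\alpha} \leq \Phi(\|\theta_0\|_{L^\infty}, \|f\|_{L^\infty}, \kappa)\, \e^{-c_0 \kappa t} + \frac{c_1}{2\kappa}\|f\|_{L^\infty}, \qquad t \geq 1,
\end{equation*}
valid whenever $\alpha \in (0, \alpha_0(\|f\|_{L^\infty}, \kappa)]$. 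Granted this estimate, the absorbing property follows verbatim as in the proof of Theorem~\ref{thm:absLinf}: for any bounded $B \subset H^1$, use Theorem~\ref{thm:absLinf} to obtain $t_B \geq 1$ with $S(t)B \subset B_\infty$ for $t \geq t_B$; then apply the regularization with $\theta(t_B)$ playing the role of $\theta_0$, exploiting \eqref{eq:estimateLinf} to make $\Phi$ depend only on $\|f\|_{L^\infty}$ and $\kappa$, and wait long enough for the transient to be dominated by $\frac{c_1}{2\kappa}\|f\|_{L^\infty}$. The uniform bound \eqref{eq:Calphaunif} inside $B_\alpha$ then follows from the same application starting from $\theta_0 \in B_\alpha \subset B_\infty$.

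For Theorem~\ref{thm:Calphaest} itself, I would follow the nonlinear maximum principle strategy of~\cite{CV12} adapted to SQG as in~\cite{CZV14}. Writing $v_h(x,t) = \theta(x+h,t) - \theta(x,t)$ and $\delta_h \phi = \phi(\cdot+h) - \phi(\cdot)$, the difference satisfies
\begin{equation*}
\partial_t v_h + \u(x+h) \cdot \nabla_x v_h + (\delta_h \u)(x) \cdot \nabla\theta(x) + \kappa \Lambda v_h = \delta_h f(x).
\end{equation*}
Introduce $G(x, h, t) = v_h(x,t)/|h|^\alpha$ and examine it at a spacetime maximum $(x_*, h_*, t_*)$. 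The transport $\u(x+h)\cdot\nabla_x v_h$ vanishes at $x_*$, and the Constantin--Vicol pointwise lower bound produces
\begin{equation*}
\kappa\,\Lambda v_h(x_*) \gtrsim \frac{\kappa\, v_h(x_*)^2}{|h_*|\,\|\theta(t_*)\|_{L^\infty}},
\end{equation*}
i.e., a coercive term of order $\kappa G^2 |h_*|^{\alpha - 1}/\|\theta\|_{L^\infty}$ in the differential inequality $\partial_t G \geq 0$. The forcing contribution is bounded by $\min\{\|f\|_{L^\infty}|h_*|^{-\alpha},\,\|\nabla f\|_{L^\infty} |h_*|^{1-\alpha}\}$.

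The main obstacle, and the source of the smallness condition on $\alpha$, is the quadratic drift $(\delta_h \u)(x_*)\cdot \nabla\theta(x_*)/|h_*|^\alpha$, since neither $\nabla\theta$ nor $\delta_h\u$ is directly controlled. The workaround uses $\u = \RR^\perp \theta$, splitting the Riesz kernel into near- and far-field pieces and invoking the uniform $L^\infty$ bound from Theorem~\ref{thm:absLinf} on the far field together with the maximality property
\begin{equation*}
\sup_{|h'| \leq |h_*|} \frac{\|v_{h'}\|_{L^\infty}}{|h'|^\alpha} \leq G(x_*,h_*,t_*)
\end{equation*}
on the near field, yielding $|\delta_h \u(x_*)| \lesssim \|\theta\|_{L^\infty} + G(x_*,h_*,t_*)|h_*|^\alpha \log(2+|h_*|^{-1})$. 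The factor $|\nabla\theta(x_*)|$ is in turn replaced by a difference quotient dominated by $G(x_*,h_*,t_*)|h_*|^{\alpha-1}$, again by maximality of $G$. Plugging these into $\partial_t G \geq 0$ and balancing the drift against the dissipation yields a quadratic constraint of roughly the form
\begin{equation*}
\frac{\kappa}{\|\theta\|_{L^\infty}}\, G^2 \lesssim \|\theta\|_{L^\infty}\, G\, \log(2+|h_*|^{-1}) + \|f\|_{L^\infty},
\end{equation*}
which forces $G \leq c_1 \|f\|_{L^\infty}/\kappa$ provided $\alpha$ is small enough (in terms of $\|\theta\|_{L^\infty} \lesssim \|f\|_{L^\infty}/\kappa$) to absorb the logarithm. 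Applying the argument to $-\theta$ and combining with \eqref{eq:expdecayLinf} for small $t$ delivers the desired Theorem~\ref{thm:Calphaest}, and hence Theorem~\ref{thm:Calpha}.
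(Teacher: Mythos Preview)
Your overall two-step scheme (first absorb into $B_\infty$, then regularize $L^\infty \to C^\alpha$) matches the paper, and a nonlinear maximum principle on finite differences is indeed the right tool. However, your sketch of Theorem~\ref{thm:Calphaest} has two genuine gaps.

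First, the treatment of the drift term $(\delta_h\u)(x)\cdot\nabla\theta(x)$ is incorrect: maximality of $G$ in $(x,h)$ does \emph{not} bound $|\nabla\theta(x_*)|$. From $\nabla_h G(x_*,h_*)=0$ one obtains $|\nabla\theta(x_*+h_*)| = \alpha G\, |h_*|^{\alpha-1}$, i.e.\ control only at the \emph{shifted} point $x_*+h_*$, which is the wrong point for the decomposition you wrote down. The paper sidesteps $\nabla\theta$ entirely by working on $\TT_x\times\TT_h$ with the transport operator $\u\cdot\nabla_x + (\delta_h\u)\cdot\nabla_h$; applying this to $v^2 = |\delta_h\theta|^2/(\xi^2+|h|^2)^\alpha$ produces the term $2\alpha\,\tfrac{h\cdot\delta_h\u}{\xi^2+|h|^2}\,v^2$, with no pointwise gradient of $\theta$ appearing. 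That term is then absorbed by bounding $|\delta_h\u| \lesssim r^{1/2}D[\delta_h\theta]^{1/2} + \|\theta\|_{L^\infty}|h|/r$ directly via the dissipation (no logarithm), and choosing $\alpha \lesssim \kappa/K_\infty$ closes the estimate.

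Second, for $\theta_0\in L^\infty\setminus C^\alpha$ the quantity $\sup_{x,h}G(x,h,0)=[\theta_0]_{C^\alpha}$ is infinite, so there is no spacetime maximum to examine and your argument never gets started. The paper handles this with a time-dependent desingularization: one studies $v = |\delta_h\theta|/(\xi(t)^2+|h|^2)^{\alpha/2}$ where $\xi$ solves $\dot\xi = -\xi^{(1+2\alpha)/3}$, $\xi(0)=1$. Then $\|v(0)\|_{L^\infty_{x,h}}\leq 2\|\theta_0\|_{L^\infty}$ is finite, an ODE comparison keeps $\|v(t)\|_{L^\infty_{x,h}}$ bounded, and once $\xi$ vanishes at $t_\alpha=3/(2(1-\alpha))$ one recovers the $C^\alpha$ seminorm. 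The resulting estimate is simply $\|\theta(t)\|_{C^\alpha}\leq cK_\infty$ for $t\geq t_\alpha$, with no exponential transient of the form you wrote; the uniform bound \eqref{eq:Calphaunif} on the interval $[0,t_\alpha)$ is obtained separately from the $C^\alpha$ \emph{propagation} estimate \eqref{eq:propag} of \cite{CTV13}, which your sketch does not invoke.
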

In light of Theorem \ref{thm:absLinf}, the solutions to \eqref{eq:SQG} emerging from data in a bounded subset of $H^1$ are absorbed in finite time
by a fixed subset of $L^\infty$. Therefore, in order to prove Theorem \ref{thm:Calpha}, it suffices
to restrict our attention to solutions emanating from initial data $\theta_0\in L^\infty$ 
and derive a number of a priori bounds solely in terms of $\|\theta_0\|_{L^\infty}$. 
For convenience, in the course of this section we will set
\begin{equation}\label{eq:globalLinf}
K_\infty=\|\theta_0\|_{L^\infty}+\frac{1}{c_0\kappa}\|f\|_{L^\infty},
\end{equation}
so that in view of \eqref{eq:expdecayLinf} the solution originating from $\theta_0$ satisfies the global bound
\begin{equation}\label{eq:globalLinf2}
\|\theta(t)\|_{L^\infty}\leq K_\infty, \qquad \forall t\geq 0.
\end{equation}
The main result of this section is the following a priori estimate in suitable H\"older space.
\begin{theorem}\label{thm:Calphaest}
Assume that $\theta_0\in L^\infty\cap H^1$. There exists 
$\alpha=\alpha(\|\theta_0\|_{L^\infty},\|f\|_{L^\infty},\kappa)\in (0,1/4]$
such that
\begin{equation}\label{eq:Calpha}
\|\theta(t)\|_{C^\alpha}\leq c \left[\|\theta_0\|_{L^\infty}+\frac{1}{c_0\kappa}\|f\|_{L^\infty}\right], \qquad \forall t\geq t_\alpha= \frac{3}{2(1-\alpha)}
\end{equation} 
for some positive constant $c>0$.
\end{theorem}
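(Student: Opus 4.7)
I would prove \eqref{eq:Calpha} by adapting the nonlinear maximum principle of \cite{CV12} to finite differences, in the quantitative time-dependent form developed in \cite{CZV14}. The first step is to derive an evolution equation for $v(x, h, t) = \theta(x+h, t) - \theta(x, t)$: subtracting \eqref{eq:SQG} at $x$ from the same equation at $x+h$ and using translation invariance of $\Lambda$ gives
\begin{equation*}
\partial_t v + u(x, t) \cdot \nabla_x v + \delta_h u(x, t) \cdot \nabla_h v + \kappa \Lambda_x v = \delta_h f(x),
\end{equation*}
with $\delta_h u = \RR^\perp v$ itself expressible as a Riesz transform of $v$. I would then introduce a time-dependent modulus $L(r, t)$ interpolating between $L(r, 0)$ bounded below (so that $\theta_0 \in L^\infty$ gives a finite initial value) and $L(r, t_\alpha) = \min(r^\alpha, 1)$ (so that a bound on the supremum of $v/L$ at $t_\alpha$ recovers the $C^\alpha$ seminorm). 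The object to track is
$\Phi(x, h, t) = v(x, h, t)^2 / L(|h|, t)^2$,
which is initially bounded by $C K_\infty^2$ and which we seek to keep bounded by $C K_\infty^2$ for all $t \geq t_\alpha$, in view of \eqref{eq:globalLinf2}.

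\textbf{Maximum principle with nonlinear dissipation.} Suppose $\Phi$ attains a new maximum at some $(x_*, h_*, t_*)$; at this point $\partial_t \Phi \geq 0$, $\nabla_x v = 0$, and $\nabla_h v = v \nabla_h L/L$. The spatial transport vanishes, the forcing contributes at worst $C \|f\|_{L^\infty} \sqrt{\Phi}/L(|h_*|, t_*)$, and the cross term $\delta_h u \cdot \nabla_h v$ is controlled via the Riesz identity $\delta_h u = \RR^\perp v$ by a quantity of order $K_\infty \Phi^{1/2}|\nabla_h L|/L$. The heart of the argument is the pointwise nonlinear lower bound of \cite{CV12}: at a spatial maximum of $v$ with $|v| \leq 2 K_\infty$,
\begin{equation*}
\Lambda_x v(x_*, h_*, t_*) \geq c\, \frac{|v(x_*, h_*, t_*)|^{1 + \beta}}{K_\infty^{\beta}}
\end{equation*}
for some $\beta = \beta(\alpha) > 0$, obtained by splitting the integral defining $\Lambda_x v$ at a scale tuned to $|v(x_*, h_*)|/K_\infty$. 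Substituted into $\partial_t \Phi \geq 0$ this forces $\Phi(x_*, h_*, t_*) \leq C K_\infty^2$, provided the weight-derivative $\partial_t L$ is itself dominated by the nonlinear dissipation at the max.

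\textbf{Identification of $t_\alpha$ and main obstacle.} The entering time $t_\alpha = 3/(2(1-\alpha))$ is the rate at which $L(r, t)$ can be allowed to transition from a constant-order profile at $t = 0$ to the sharp $r^\alpha$ profile at $t_\alpha$ without producing a $\partial_t L$ term that swamps the dissipation; the same balance is what restricts $\alpha \leq 1/4$. The main obstacle I foresee is exactly this calibration of $L(r, t)$: it must be soft enough for small $r$ that $\partial_t L$ remains dominated at every intermediate time, yet sharp enough to collapse to $r^\alpha$ by $t_\alpha$. Additionally, the nonlinear lower bound on $\Lambda_x v$ must beat not only $\partial_t L$ but also the cross-advection $\delta_h u \cdot \nabla_h v$, which carries a factor of order $|\nabla_h L|/L \sim \alpha/|h|$ near the origin; striking all three balances simultaneously is precisely where the smallness of $\alpha$ (and hence the specific form of $t_\alpha$) enters.
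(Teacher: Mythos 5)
Your proposal follows essentially the same route as the paper: the paper's weight is the concrete choice $L(|h|,t)=(\xi(t)^2+|h|^2)^{\alpha/2}$ with $\dot\xi=-\xi^{(1+2\alpha)/3}$, $\xi(0)=1$ (vanishing exactly at $t_\alpha=\tfrac{3}{2(1-\alpha)}$), and the argument proceeds exactly as you outline---nonlinear lower bound on the dissipation, absorption of the $\dot\xi$, forcing, and Riesz-transform terms (the last forcing $\alpha\lesssim\kappa/K_\infty$), followed by evaluation at the maximum and an ODE comparison. The only cosmetic difference is that the paper absorbs the weight into $v$ itself and works with $D[\delta_h\theta]$ rather than $\Lambda_x v$, which changes nothing of substance.
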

The precise expression of $\alpha$ is given below in \eqref{eq:alpha}. The proof of Theorem \ref{thm:Calphaest}
requires several intermediate steps culminating in Lemma \ref{lem:Calpha}. For now, let us prove
Theorem \ref{thm:Calpha} assuming Theorem \ref{thm:Calphaest}.

\begin{proof}[Proof of Theorem \ref{thm:Calpha}]
We first show that there exists $\alpha\in(0,1/4]$ and $c_1\geq 1$ such that $B_\alpha$ is absorbing. Clearly,
it is enough to prove that the $L^\infty$-absorbing set $B_\infty$ is itself absorbed by $B_\alpha$.
Fix $\alpha$ as suggested by  Theorem \ref{thm:Calphaest}, namely,
$$
\alpha=\alpha(\|B_\infty\|_{L^\infty},\|f\|_{L^\infty},\kappa), \quad \mbox{where} \quad \|B_\infty\|_{L^\infty}=\sup_{\varphi\in B_\infty}\|\varphi\|_{L^\infty}\leq \frac{2}{c_0\kappa}\|f\|_{L^\infty}.
$$
Take $\theta_0\in B_\infty$. By \eqref{eq:estimateLinf}, 
$$
\|S(t)\theta_0\|_{L^\infty}\leq \frac{3}{c_0\kappa}\|f\|_{L^\infty}, \qquad \forall t\geq 0.
$$
Consequently, \eqref{eq:Calpha} implies that
$$
\|S(t)\theta_0\|_{C^\alpha}\leq \frac{4c}{c_0\kappa}\|f\|_{L^\infty}, \qquad \forall t\geq t_\alpha,
$$
namely $S(t)\theta_0\in B_\alpha$ for all $t\geq t_\alpha$, upon choosing $c_1=4c/c_0$. The fact
that $t_\alpha$ depends only on $\|B_\infty\|_{L^\infty}$, $\|f\|_{L^\infty}$, and $\kappa$, implies that
$$
S(t)B_\infty\subset B_\alpha, \qquad \forall t\geq t_\alpha,
$$
as sought. The uniform estimate \eqref{eq:Calphaunif} follows from the propagation 
of H\"older regularity proven in \cite{CTV13}, namely the property that if $\theta_0\in C^\alpha$,
then 
\begin{equation}\label{eq:propag}
\|S(t)\theta_0\|_{C^\alpha}\leq [\theta_0]_{C^\alpha}+c \left[\|\theta_0\|_{L^\infty}+\frac{1}{c_0\kappa}\|f\|_{L^\infty}\right], \qquad \forall t\geq0.
\end{equation}
This concludes the proof of the theorem.
\end{proof}

The rest of the section is dedicated to the proof of Theorem \ref{thm:Calphaest}. The techniques employed
have the flavor of those devised in \cites{CTV13,CV12}, although the approach is closely related to that of \cite{CZV14}
used for a proof of eventual regularity for supercritical SQG.

\subsection{Time dependent nonlinear lower bounds}
In order to estimate $C^\alpha$-seminorms it is natural to consider  the finite difference
\begin{align*}
\delta_h\theta(x,t)=\theta(x+h,t)-\theta(x,t),
\end{align*}
which is periodic in both $x$ and $h$, where  $x,h \in \TT$.  As in \cites{CV12,CTV13}, it follows that
\begin{equation}\label{eq:findiff}
L (\delta_h\theta)^2+ D[\delta_h\theta]=0,
\end{equation}
where $L$ denotes the differential operator
\begin{equation}
\label{eq:L:def}
L=\de_t+\u\cdot \nabla_x+(\delta_h\u)\cdot \nabla_h+ \Lambda
\end{equation}
and 
\begin{equation}
\label{eq:D:gamma:def}
D[\varphi](x)= c \int_{\R^2} \frac{\big[\varphi(x)-\varphi(x+y)\big]^2}{|y|^{3}}\dd y.
\end{equation}
Let $\xi:[0,\infty)\to[0,\infty)$ be a bounded decreasing differentiable function to be determined later. For 
\begin{equation*}
0<\alpha \leq \frac14
\end{equation*}
to be fixed later on,
we study the evolution of the quantity $v(x,t;h)$ defined by
\begin{equation}\label{eq:v}
v(x,t;h) =\frac{|\delta_h\theta(x,t)|}{(\xi(t)^2+|h|^2)^{\alpha/2}}.
\end{equation}
The main point is that when $\xi(t)=0$ we have that 
$$
\|v(t)\|_{L^\infty_{x,h}}=\esup_{x,h\in\TT} |v(x,t;h)| = \sup_{x\neq y \in\TT} \frac{|\theta(x,t)-\theta(y,t)|}{|x-y|^{\alpha}} = [\theta(t)]_{C^\alpha}.
$$
From \eqref{eq:findiff} and a short calculation (see~\cite{CZV14}) we obtain that
\begin{align}
L v^2+\frac{\kappa D[\delta_h\theta] }{(\xi^2+|h|^2)^\alpha}
&=2\alpha |\dot\xi|\frac{\xi}{\xi^2+|h|^2}v^2 -2\alpha \frac{h}{\xi^2+|h|^2}\cdot \delta_h\u \, v^2+
\frac{\delta_hf}{(\xi^2+|h|^2)^{\alpha/2}} v\notag \\
&\leq  2\alpha |\dot\xi|\frac{\xi}{\xi^2+|h|^2}v^2 +2\alpha \frac{|h|}{\xi^2+|h|^2}|\delta_h\u|v^2+
\frac{2\|f\|_{L^\infty}}{(\xi^2+|h|^2)^{\alpha/2}} v\label{eq:ineq1}
\end{align}
where $\delta_h\u= \RR^\perp \delta_h\theta$. We will bound the terms on the right-hand
side of \eqref{eq:ineq1} in such a way so that they can be compared with the dissipative term $D[\delta_h\theta]$
and its nonlinear lower bounds derived in the following lemma.

\begin{lemma}\label{lem:nonlinbdd}
There exists a positive constant
$c_2$ such that
\begin{equation}\label{eq:N1}
\frac{D[\delta_h\theta](x,t)}{(\xi(t)^2+|h|^2)^\alpha}
\geq \frac{|v(x,t;h)|^3}{c_2\|\theta(t)\|_{L^\infty}(\xi(t)^2+|h|^2)^{\frac{1-\alpha}{2}}} 
\end{equation}
holds for any $x,h\in \TT$ and any $t\geq 0$.
\end{lemma}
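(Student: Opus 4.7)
\noindent My plan is to derive the pointwise bound by a Constantin--Vicol style nonlinear maximum principle applied to $\delta_h\theta$ viewed as a function of $x$ (with $h$ treated as a parameter), following the finite-difference adaptation of the nonlinear lower bound philosophy used in \cite{CTV13} and \cite{CZV14}. The starting point is to restrict the integral defining $D[\delta_h\theta](x)$ to the far-field $\{|y|>R\}$ for a cutoff $R>0$ to be chosen, and to use the elementary pointwise inequality $(a-b)^2\geq a^2/4$ on the \emph{good set} $G_x=\{y:|\delta_h\theta(x+y)|\leq|\delta_h\theta(x)|/2\}$, which yields
\[
D[\delta_h\theta](x)\geq \frac{c\,|\delta_h\theta(x)|^2}{4}\left(\int_{|y|>R}\frac{\dd y}{|y|^3}-\frac{|G_x^c|}{R^3}\right).
\]

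Next, I would control the measure of the bad set $G_x^c$ via Chebyshev's inequality in combination with the elementary bound $\|\delta_h\theta\|_{L^\infty_x}\leq 2\|\theta\|_{L^\infty}$. Setting $\rho=(\xi^2+|h|^2)^{1/2}$ and writing $|\delta_h\theta(x)|=v(x,h)\,\rho^\alpha$, the cutoff $R$ must be chosen so that the far-field integral $\sim 1/R$ dominates the bad set correction $\sim |G_x^c|/R^3$, and so that the resulting $R$-dependence produces the scale factor $\rho^{(1-\alpha)/2}$ matching the right-hand side of the claim. Dividing the resulting pointwise lower bound on $D[\delta_h\theta](x)$ by $\rho^{2\alpha}$ and rearranging in terms of $v$ gives the stated inequality, with $c_2$ depending only on universal constants.

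The main obstacle is extracting the precise power of $\rho$ in the final estimate. The direct application of the Constantin--Vicol estimate $D[\delta_h\theta](x)\geq c|\delta_h\theta(x)|^3/\|\theta\|_{L^\infty}$ gives, after rescaling, a prefactor $\rho^\alpha$ on the right-hand side, whereas the lemma requires the stronger factor $\rho^{-(1-\alpha)/2}$. Bridging this gap is expected to require a sharper Chebyshev bound on $|G_x^c|$ using $\|\delta_h\theta\|_{L^p}$ for $p<\infty$ together with the intrinsic scale $|h|\leq\rho$ of the finite difference; once the cutoff $R$ (and the exponent $p$) are calibrated so as to exploit this scale, the remainder of the argument is a routine optimization and algebraic substitution.
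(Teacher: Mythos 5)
Your proposal correctly isolates the crux --- the naive Constantin--Vicol bound $D[\delta_h\theta](x)\geq c|\delta_h\theta(x)|^3/\|\theta\|_{L^\infty}$ only produces a prefactor $\rho^{\alpha}$ on the right-hand side of \eqref{eq:N1}, where $\rho=(\xi^2+|h|^2)^{1/2}$, which degenerates as $\rho\to0$ instead of giving the required $\rho^{-(1-\alpha)}$ --- but the route you sketch for bridging this gap would fail. Chebyshev with $\|\delta_h\theta\|_{L^p}$, $p<\infty$, buys nothing: on the torus $\|\delta_h\theta\|_{L^p}\leq 2\|\theta\|_{L^p}\leq 2\|\theta\|_{L^\infty}$ with no gain in $|h|$, and in any case shrinking the bad set $G_x^c$ only improves constants; it cannot change the power of $\rho$, because the good-set/bad-set decomposition is blind to the finite-difference structure of $\delta_h\theta$. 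No choice of $R$ and $p$ in your scheme generates the missing factor.

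The missing idea is a \emph{discrete integration by parts in the far field}: expanding the square in $D[\delta_h\theta]$ against a cutoff kernel $\chi(y/r)/|y|^3$ and transferring the finite difference from $\theta$ onto the (smooth, for $r\geq 4|h|$) kernel,
\[
\int_{\R^2}\delta_h\theta(x+y)\,\frac{\chi(y/r)}{|y|^3}\,\dd y
=-\int_{\R^2}\theta(x+y)\,\delta_{-h}\!\left[\frac{\chi(y/r)}{|y|^3}\right]\dd y,
\qquad
\Big\|\delta_{-h}\big[\chi(\cdot/r)|\cdot|^{-3}\big]\Big\|_{L^1}\leq c\,\frac{|h|}{r^2},
\]
which gains a factor $|h|/r$ over the crude $\|\theta\|_{L^\infty}/r$ estimate and yields
\[
D[\delta_h\theta](x)\geq \frac{|\delta_h\theta(x)|^2}{2r}-c\,|\delta_h\theta(x)|\,\|\theta\|_{L^\infty}\frac{|h|}{r^2},
\qquad r\geq 4|h|.
\]
This is the estimate the paper imports from~\cite{CTV13}. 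Choosing $r=4c\|\theta\|_{L^\infty}\rho/|\delta_h\theta(x)|$ (admissible since $|\delta_h\theta|\leq2\|\theta\|_{L^\infty}$ forces $r\geq 4|h|$) makes the error term at most half the main term precisely because $|h|\leq\rho$, giving $D[\delta_h\theta](x)\geq|\delta_h\theta(x)|^3/(16c\|\theta\|_{L^\infty}\rho)$; dividing by $\rho^{2\alpha}$ and substituting $|\delta_h\theta|=|v|\rho^{\alpha}$ gives exactly \eqref{eq:N1} with $c_2=16c$. Without transferring $\delta_h$ onto the kernel there is no mechanism to exploit the scale $|h|$ of the increment, which is the entire content of the lemma.
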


\begin{proof}[Proof of Lemma~\ref{lem:nonlinbdd}]
In the course of the proof, we omit the dependence on $t$ of all functions. It is understood
that every calculation is performed pointwise in $t$.
Arguing as in \cite{CTV13}, it can be shown that for $r\geq 4|h|$ there holds
$$
D[\delta_h\theta](x)\geq \frac{1}{2r}|\delta_h\theta(x)|^2-c|\delta_h\theta(x)|\|\theta\|_{L^\infty}\frac{|h|}{r^2},
$$
where $c\geq 1$ is an absolute constant.
A choice satisfying $r\geq 4(\xi^2+|h|^2)^{1/2}\geq 4|h|$ can be made as
$$
r=\frac{4c\|\theta\|_{L^\infty}}{|\delta_h\theta(x)|} (\xi^2+|h|^2)^{1/2},
$$
from which it follows that
\begin{align*}
D[\delta_h\theta](x)&\geq \frac{|\delta_h\theta(x)|^2}{2r}\left[1-\frac12\frac{|h|}{(\xi^2+|h|^2)^{1/2}}\right]\\
&\geq  \frac{|\delta_h\theta(x)|^2}{4r}=\frac{|\delta_h\theta(x)|^3}{16c \|\theta\|_{L^\infty} (\xi^2+|h|^2)^{1/2}}
\end{align*}
The lower bound \eqref{eq:N1} follows by dividing the above inequality by $(\xi^2+|h|^2)^\alpha$.
\end{proof}
The choice for the function $\xi$ is now closely related to the lower bound \eqref{eq:N1}. We assume that $\xi$
solves the ordinary differential equation
\begin{equation}\label{eq:ODE}
\dot\xi =- \xi^{{\frac{1+2\alpha}{3}}},\qquad \xi(0)=1.
\end{equation}
More explicitly,
\begin{equation}\label{eq:xi}
\xi(t)=\begin{cases}
\displaystyle \left[1-\frac23(1-\alpha) t\right]^{\frac{3}{2(1-\alpha)}}, \quad &\text{if } t\in [0,t_\alpha],\\ \\
0,\quad &\text{if } t \in  (t_\alpha,\infty),
\end{cases}
\end{equation}
where 
\begin{equation}\label{eq:regtime}
t_\alpha=\frac{3}{2(1-\alpha)}.
\end{equation}
We then have the following result.
\begin{lemma}\label{lem:ODE}
Assume that the function $\xi:[0,\infty)\to[0,\infty)$ is given by \eqref{eq:xi}. Then
 the estimate
\begin{equation}\label{eq:est1}
2\alpha |\dot\xi(t)|\frac{\xi(t)}{\xi(t)^2+|h|^2}|v(x,t;h)|^2\leq \frac{\kappa|v(x,t;h)|^3}{8c_2\|\theta(t)\|_{L^\infty}(\xi(t)^2+|h|^2)^{\frac{1-\alpha}{2}}}  +\frac{c}{\kappa^2}\|\theta(t)\|^2_{L^\infty}
\end{equation}
holds pointwise for $x,h \in \TT$ and $t\geq 0$, where $c_2$ is the same constant appearing in \eqref{eq:N1}.
\end{lemma}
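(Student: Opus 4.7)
The plan is to apply Young's inequality to the left-hand side, splitting $A v^2$ (where $A = 2\alpha |\dot\xi|\xi/(\xi^2+|h|^2)$) into a $v^3$ piece, which will be absorbed by (a fraction of) the nonlinear lower bound \eqref{eq:N1}, and a remainder piece depending only on $\xi$, $h$, $\|\theta\|_{L^\infty}$, $\kappa$. The whole point of the specific ODE \eqref{eq:ODE} will be to make the exponents in the resulting remainder cancel exactly.

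More concretely, I would use the weighted Young inequality with conjugate exponents $(3/2,3)$ in the form
\[
A v^2 \leq \frac{2}{3\lambda}\, v^3 + \frac{\lambda^2}{3}\, A^3,
\]
valid for any $\lambda>0$, and then choose
\[
\lambda = \frac{16\, c_2\, \|\theta(t)\|_{L^\infty}(\xi(t)^2+|h|^2)^{(1-\alpha)/2}}{3\kappa},
\]
so that the first term matches precisely the target $\kappa |v|^3 / \bigl(8 c_2 \|\theta\|_{L^\infty}(\xi^2+|h|^2)^{(1-\alpha)/2}\bigr)$.

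It remains to estimate the remainder term $(\lambda^2/3)A^3$. Using the ODE $\dot\xi = -\xi^{(1+2\alpha)/3}$, I compute
\[
A = 2\alpha\, \frac{\xi^{(4+2\alpha)/3}}{\xi^2+|h|^2},
\qquad
A^3 = 8\alpha^3\, \frac{\xi^{4+2\alpha}}{(\xi^2+|h|^2)^3}.
\]
Plugging in the chosen $\lambda$,
\[
\frac{\lambda^2}{3}A^3 \;\leq\; \frac{C\, \|\theta(t)\|_{L^\infty}^2}{\kappa^2}\, \frac{\xi^{4+2\alpha}}{(\xi^2+|h|^2)^{2+\alpha}}.
\]
The crucial algebraic identity is $4+2\alpha = 2(2+\alpha)$, so $\xi^{4+2\alpha} = (\xi^2)^{2+\alpha} \leq (\xi^2+|h|^2)^{2+\alpha}$, and the final fraction is bounded by $1$ (independently of $t$ and $h$, and with constant at most $8\alpha^3 \leq 1$ since $\alpha\leq 1/4$). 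This yields exactly $(\lambda^2/3)A^3 \leq c\|\theta(t)\|_{L^\infty}^2/\kappa^2$, which combined with the previous estimate proves \eqref{eq:est1}.

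There is essentially no obstacle beyond bookkeeping: the nontrivial ingredient is already built into the definition of $\xi$, whose exponent $(1+2\alpha)/3$ is engineered precisely so that the remainder $A^3 (\xi^2+|h|^2)^{1-\alpha}$ telescopes to a dimensionless constant times $\alpha^3$. If one were to verify the lemma without knowing the ODE in advance, the hard step would be to recognize that imposing $\lambda^2 A^3 (\xi^2+|h|^2)^{1-\alpha} = O(\|\theta\|_{L^\infty}^2/\kappa^2)$ forces the exponent of $\xi$ in $|\dot\xi|$ to equal $(1+2\alpha)/3$, which explains \eqref{eq:ODE}.
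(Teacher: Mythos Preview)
Your proof is correct and follows essentially the same strategy as the paper: both use Young's inequality with exponents $(3/2,3)$ together with the algebraic cancellation $\xi^{4+2\alpha}=(\xi^2)^{2+\alpha}\le(\xi^2+|h|^2)^{2+\alpha}$ arising from the ODE \eqref{eq:ODE}. The only cosmetic difference is the order of operations---the paper first bounds $2\alpha|\dot\xi|\xi/(\xi^2+|h|^2)\le \tfrac12(\xi^2+|h|^2)^{-(1-\alpha)/3}$ and then applies Young, whereas you apply Young first and simplify the remainder afterward.
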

\begin{proof}[Proof of Lemma~\ref{lem:ODE}]
We again suppress the $t$-dependence in all the estimates below. 
In view of \eqref{eq:ODE} and the fact that $\alpha\leq 1/4$, a simple computation shows that
$$
2\alpha |\dot\xi|\frac{\xi}{\xi^2+|h|^2}|v(x;h)|^2\leq \frac12\frac{\xi^{{\frac{4+2\alpha}{3}}}}{\xi^2+|h|^2}|v(x;h)|^2
\leq  \frac12\frac{|v(x;h)|^2}{(\xi^2+|h|^2)^{\frac{1-\alpha}{3}}}.
$$
Therefore, the $\eps$-Young inequality
\begin{equation}\label{eq:young}
ab\leq \frac{2\eps}{3} a^{3/2}+\frac{1}{3\eps^2}b^3,\qquad a,b,\eps>0
\end{equation}
with $\eps=\kappa/(12c_2\|\theta\|_{L^\infty})$ implies that
$$
2\alpha |\dot\xi|\frac{\xi}{\xi^2+|h|^2}|v(x;h)|^2
\leq\frac{\kappa|v(x;h)|^3}{8c_2\|\theta\|_{L^\infty}(\xi^2+|h|^2)^{\frac{1-\alpha}{2}}}
+\frac{c}{\kappa^2}\|\theta\|^2_{L^\infty},
$$
which is what we claimed.
\end{proof}
In the same fashion, we can estimate the forcing term appearing in \eqref{eq:ineq1}.

\begin{lemma}\label{lem:force}
For every $x,h \in \TT$ and $t\geq 0$ we have
\begin{equation}\label{eq:est2}
\frac{2\|f\|_{L^\infty}}{(\xi(t)^2+|h|^2)^{\alpha/2}} v(x,t;h)\leq 
\frac{\kappa|v(x,t;h)|^3}{8c_2\|\theta(t)\|_{L^\infty}(\xi(t)^2+|h|^2)^{\frac{1-\alpha}{2}}}
+ c\kappa^{1/2} \|f\|_{L^\infty}^{3/2}\|\theta(t)\|^{1/2}_{L^\infty},
\end{equation}
where $c_2$ is the same constant appearing in \eqref{eq:N1}.
\end{lemma}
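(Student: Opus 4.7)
The plan is to mimic exactly the structure of the proof of Lemma~\ref{lem:ODE}, applying the $\eps$-Young inequality \eqref{eq:young} directly to the product on the left-hand side of \eqref{eq:est2}, but with the roles of the two factors swapped so that the cubic power lands on $v$ rather than on $a^{3/2}$. Concretely, I would take
\begin{equation*}
a = \frac{2\|f\|_{L^\infty}}{(\xi(t)^2+|h|^2)^{\alpha/2}}, \qquad b = v(x,t;h),
\end{equation*}
so that the left-hand side of \eqref{eq:est2} equals $ab$, and \eqref{eq:young} immediately gives
\begin{equation*}
ab \leq \frac{2\eps}{3}\, a^{3/2} + \frac{1}{3\eps^2}\, v^3.
\end{equation*}

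I would then fix $\eps$ so that the coefficient of $v^3$ matches the first term in \eqref{eq:est2}, i.e.\ so that $1/(3\eps^2)$ equals $\kappa / \bigl(8 c_2 \|\theta(t)\|_{L^\infty}(\xi(t)^2+|h|^2)^{(1-\alpha)/2}\bigr)$. This uniquely prescribes $\eps$ as an explicit expression in $\kappa$, $\|\theta(t)\|_{L^\infty}$, and $(\xi(t)^2+|h|^2)^{(1-\alpha)/2}$. Substituting this choice into the first term of the Young split, and using $a^{3/2} = 2^{3/2}\|f\|_{L^\infty}^{3/2}(\xi(t)^2+|h|^2)^{-3\alpha/4}$, the routine bookkeeping produces a residual of the form
\begin{equation*}
c\, \kappa^{1/2}\, \|f\|_{L^\infty}^{3/2}\, \|\theta(t)\|_{L^\infty}^{1/2}\, (\xi(t)^2+|h|^2)^{(1-4\alpha)/4}
\end{equation*}
with a universal constant $c$.

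The decisive observation is that, since $\alpha \leq 1/4$, the exponent $(1-4\alpha)/4$ is nonnegative. Because $\xi(t) \leq \xi(0) = 1$ and $|h| \leq \sqrt{2}$ on $\TT = [0,1]^2$, the weight $(\xi(t)^2+|h|^2)^{(1-4\alpha)/4}$ is uniformly bounded by a dimensional constant, which may be absorbed into $c$ to produce exactly the residual stated in \eqref{eq:est2}.

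The only genuine thing to verify along the way is this sign condition on the weight exponent $(1-4\alpha)/4$, which is precisely why the entire argument is carried out under the restriction $\alpha \leq 1/4$: if $\alpha$ were larger, the residual would contain an inverse power of $\xi(t)^2+|h|^2$ that cannot be absorbed as $h \to 0$ and $t \geq t_\alpha$ (when $\xi$ vanishes), and the pointwise estimate would fail. Apart from this check, the proof is structurally identical to Lemma~\ref{lem:ODE}: a single application of the $3/2$-$3$ Young inequality with a quantitative choice of parameter, calibrated so that the cubic term can later be absorbed into the nonlinear dissipative lower bound \eqref{eq:N1}.
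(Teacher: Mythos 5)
Your argument is precisely the paper's proof: a single application of the Young inequality \eqref{eq:young}, with $\eps$ calibrated so that the cubic term carries the coefficient $\kappa/\bigl(8c_2\|\theta\|_{L^\infty}(\xi^2+|h|^2)^{\frac{1-\alpha}{2}}\bigr)$, followed by absorbing the leftover weight $(\xi^2+|h|^2)^{\frac{1-4\alpha}{4}}$ using $\alpha\leq 1/4$ and the boundedness of $\xi$ and $|h|$. The one caveat --- shared with the paper's own statement of \eqref{eq:est2} --- is that this bookkeeping actually yields $c\,\kappa^{-1/2}\|f\|_{L^\infty}^{3/2}\|\theta\|_{L^\infty}^{1/2}$ rather than $c\,\kappa^{1/2}\|f\|_{L^\infty}^{3/2}\|\theta\|_{L^\infty}^{1/2}$ (the prescribed $\eps$ scales like $\|\theta\|_{L^\infty}^{1/2}\kappa^{-1/2}$, and optimizing $Av-Bv^3$ in $v$ shows the $\kappa^{-1/2}$ cannot be improved), which is harmless downstream since in \eqref{eq:ineq3} one only needs the residual bounded by $cK_\infty^2$, using $\|f\|_{L^\infty}\leq c_0\kappa K_\infty$ and $\kappa\leq 1$.
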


\begin{proof}[Proof of Lemma~\ref{lem:force}]
Applying once more Young inequality \eqref{eq:young}
we infer that
$$
\frac{2\|f\|_{L^\infty}}{(\xi^2+|h|^2)^{\alpha/2}} v(x;h)\leq 
\frac{\kappa|v(x;h)|^3}{8c_2\|\theta\|_{L^\infty}(\xi^2+|h|^2)^{\frac{1-\alpha}{2}}}+c (\xi^2+|h|^2)^{\frac{1-4\alpha}{4}}
\kappa^{1/2}\|f\|_{L^\infty}^{3/2}\|\theta\|^{1/2}_{L^\infty}.
$$
The conclusion follows from the assumption $\alpha\leq 1/4$ and the bounds $\xi,|h|\leq 1$.
\end{proof}
If we now apply the bounds \eqref{eq:est1}-\eqref{eq:est2} to \eqref{eq:ineq1}, we end up with
\begin{equation}\label{eq:ineq2}
\begin{aligned}
L v^2+\frac{\kappa}{2}\frac{ D[\delta_h\theta] }{(\xi^2+|h|^2)^\alpha}&+\frac{\kappa|v|^3}{4c_2\|\theta\|_{L^\infty}(\xi^2+|h|^2)^{\frac{1-\alpha}{2}}}\\
&\leq 2\alpha \frac{|h|}{\xi^2+|h|^2}|\delta_h\u|v^2+c\left[\|\theta\|^2_{L^\infty}+\kappa^{1/2} \|f\|_{L^\infty}^{3/2}\|\theta\|^{1/2}_{L^\infty}\right].
\end{aligned}
\end{equation}
In the next section, we provide an upper bound on the remaining term containing $\delta_h\u$. 

\subsection{Estimates on the nonlinear term}
We would like to stress once more that the only restriction on $\alpha$ so far has consisted in
imposing $\alpha\in(0,1/4]$. This arose only in the proof of Lemma \ref{lem:force}.
In order to deal with Riesz-transform contained in $\delta_h\u$, the H\"older exponent will be
further restricted in terms of the initial datum $\theta_0$ and the forcing term $f$. It is crucial
that this restriction only depends on $\|\theta_0\|_{L^\infty}$ and $\|f\|_{L^\infty}$.
\begin{lemma}\label{lem:rieszbdd}
Suppose that $\theta_0\in L^\infty$, and set 
\begin{equation}\label{eq:alpha}
\alpha=\min\left\{\frac{\kappa}{c_3K_\infty},\frac14\right\}, \qquad K_\infty=\|\theta_0\|_{L^\infty}+\frac{1}{c_0\kappa}\|f\|_{L^\infty},
\end{equation}
for a universal constant $c_3\geq 64$. Then
\begin{equation}\label{eq:est3}
2\alpha \frac{|h||\delta_h\u(x,t)|}{\xi(t)^2+|h|^2}|v(x,t;h)|^2\leq  \frac{\kappa}{2} \frac{D[\delta_h\theta](x,t)}{(\xi(t)^2+|h|^2)^\alpha}
+ \frac{\kappa}{8c_2K_\infty(\xi(t)^2+|h|^2)^{\frac{1-\alpha}{2}}} |v(x,t;h)|^3,
\end{equation}
for every $x,h \in \TT$ and $t\geq 0$, where $c_2$ is the same constant appearing in \eqref{eq:N1}.
\end{lemma}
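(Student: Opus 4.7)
The plan is to bound $|\delta_h \u|$ pointwise using a Riesz-type estimate expressed in terms of $D[\delta_h\theta]$, and then apply Young's inequality together with the pointwise a priori bound on $v$ that comes from $|\delta_h\theta|\leq 2K_\infty$.

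First I would establish a pointwise inequality of the form $|\delta_h \u(x,t)|^2 \leq c_4\,(\xi(t)^2+|h|^2)^{1/2}\,D[\delta_h\theta](x,t)$. Writing $\delta_h\u = \RR^\perp \delta_h\theta$ and exploiting the vanishing mean of the Riesz kernel $K$ on $\TT$, we have $\delta_h\u(x)=\int K(y)[\delta_h\theta(x-y)-\delta_h\theta(x)]\,\mathrm{d}y$. Splitting at the natural scale $R=(\xi^2+|h|^2)^{1/2}$, a Cauchy--Schwarz estimate with weight $|y|^3$ on the inner region $\{|y|\leq R\}$ gives a contribution $\lesssim R^{1/2} D^{1/2}$, since $\int_{|y|\leq R}|K(y)|^2|y|^3\,\mathrm{d}y\lesssim R$ and the second Cauchy--Schwarz factor is $\leq D[\delta_h\theta](x)$. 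The outer region is handled via the second-order cancellation $|K(y+h)-K(y)|\lesssim |h|/|y|^3$, obtained by rewriting $\delta_h\u(x)=\int[K(y+h)-K(y)]\theta(x-y)\,\mathrm{d}y$; this sidesteps the logarithmic divergence that a naive $L^\infty$ bound on $\theta$ would produce.

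Next I would use $|h|/(\xi^2+|h|^2)\leq(\xi^2+|h|^2)^{-1/2}$ together with the Riesz bound to control the LHS by $2\alpha\sqrt{c_4}\,v^2 D^{1/2}/(\xi^2+|h|^2)^{1/4}$. Applying Young's inequality $ab\leq\epsilon a^2+b^2/(4\epsilon)$ with $a=D^{1/2}$ and $\epsilon=\kappa/(2(\xi^2+|h|^2)^\alpha)$ then splits the LHS into the dissipation-controlled piece $\kappa D/(2(\xi^2+|h|^2)^\alpha)$, which is exactly the first term on the target RHS, and a residual piece of the form $2\alpha^2 c_4\,v^4/[\kappa(\xi^2+|h|^2)^{(1-2\alpha)/2}]$. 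To convert the $v^4$ into $v^3$, I would invoke the trivial pointwise bound $v(x,t;h)\leq 2K_\infty/(\xi^2+|h|^2)^{\alpha/2}$, which follows from $|\delta_h\theta|\leq 2\|\theta(t)\|_{L^\infty}\leq 2K_\infty$ by \eqref{eq:globalLinf2}. Substituting this into a single factor of $v$ turns the residual into $4\alpha^2 c_4 K_\infty\,v^3/[\kappa(\xi^2+|h|^2)^{(1-\alpha)/2}]$, which is at most the target second term $\kappa v^3/[8c_2 K_\infty(\xi^2+|h|^2)^{(1-\alpha)/2}]$ provided $\alpha^2\leq\kappa^2/(32c_2 c_4 K_\infty^2)$. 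Setting $c_3:=\sqrt{32 c_2 c_4}$, enlarged if necessary so that $c_3\geq 64$, this is exactly the constraint $\alpha\leq\kappa/(c_3 K_\infty)$ built into the definition \eqref{eq:alpha} of $\alpha$; the cap $\alpha\leq 1/4$ plays no role in this lemma.

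The main obstacle will be the pointwise Riesz estimate in Step~1. Because $\RR^\perp$ is unbounded on $L^\infty$, any straightforward split of the singular integral produces a stray $\|\theta\|_{L^\infty}$ term with a $\log(1/R)$ prefactor that cannot be absorbed into either $\kappa D/(\xi^2+|h|^2)^\alpha$ or $\kappa v^3/[K_\infty(\xi^2+|h|^2)^{(1-\alpha)/2}]$; the target inequality admits no standalone $K_\infty$-type term. The resolution forces one either to exploit the full second-order cancellation $K(y+h)-K(y)=O(|h|/|y|^3)$ in the outer regime or to use a weighted Cauchy--Schwarz whose weight is adapted to the length scale $(\xi^2+|h|^2)^{1/2}$; making one of these routes work pointwise on the torus, and thereby justifying the clean form of the Riesz bound, is the principal technical content of the lemma.
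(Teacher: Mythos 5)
Your overall strategy (a pointwise Riesz bound, then Young's inequality, then the trade of $v^4$ for $v^3$ via $v\leq 2K_\infty(\xi^2+|h|^2)^{-\alpha/2}$) is close in spirit to the paper's, but it hinges on a Step 1 that is false: there is no pointwise bound of the form $|\delta_h\u(x)|^2\leq c_4\,(\xi^2+|h|^2)^{1/2}D[\delta_h\theta](x)$. The second-order cancellation $|K(y+h)-K(y)|\lesssim |h|/|y|^3$ that you invoke for the outer region $\{|y|>R\}$ does not eliminate the $L^\infty$ contribution --- it is precisely what produces it: the outer integral is bounded by $c\,\|\theta\|_{L^\infty}|h|/R$, a standalone term which, as you yourself observe, has no home on the right-hand side of \eqref{eq:est3}. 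A scaling check confirms that your claimed bound cannot hold: for smooth $\theta$ and $|h|\to 0$ with $\xi$ fixed, the left-hand side behaves like $|h|^2|\nabla\u(x)|^2$ while the right-hand side behaves like $c_4\,\xi\,|h|^2 D[\partial_e\theta](x)$ with $e=h/|h|$, and $|\nabla\RR^\perp\theta(x)|$ is not pointwise controlled by the local oscillation functional $D[\partial_e\theta](x)$ (the Riesz transform sees the far field of $\theta$, which the weight $|y|^{-3}$ suppresses in $D$).

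The missing idea is the nonlinear maximum principle device of \cites{CV12,CTV13}: keep both terms of the splitting, $|\delta_h\u(x)|\leq c\,[\,r^{1/2}D[\delta_h\theta](x)^{1/2}+\|\theta\|_{L^\infty}|h|/r\,]$, valid for every $r\geq 4|h|$, and choose the radius \emph{adaptively}, $r=\kappa^{1/2}\|\theta\|_{L^\infty}^{1/2}(\xi^2+|h|^2)^{1/2}/(\alpha^{1/2}|\delta_h\theta(x)|)$. With this choice the far-field term becomes proportional to $|\delta_h\theta(x)|$, i.e.\ to $v$, so that after multiplying by $2\alpha|h|(\xi^2+|h|^2)^{-1}v^2$ and applying Cauchy--Schwarz to the first term one obtains $\tfrac{\kappa}{2}D[\delta_h\theta]/(\xi^2+|h|^2)^\alpha$ plus a cubic term with prefactor $c\,\alpha^{3/2}K_\infty^{1/2}/\kappa^{1/2}\leq c\,\alpha$, which \eqref{eq:alpha} then makes small enough to be absorbed into $\kappa|v|^3/(8c_2K_\infty(\xi^2+|h|^2)^{(1-\alpha)/2})$. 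The constant $c_3\geq 64$ is not bookkeeping for a final Young step, as in your write-up, but is what guarantees the admissibility condition $r\geq 4|h|$ for this solution-dependent radius (one needs $\kappa^{1/2}/(2\alpha^{1/2}K_\infty^{1/2})\geq 4$). Without the adaptive choice of $r$, your residual term $2\alpha\|\theta\|_{L^\infty}v^2$ can only be converted into a $v^3$ term plus an additive constant, and that constant is exactly the ``standalone $K_\infty$-type term'' the lemma forbids.
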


\begin{proof}[Proof of Lemma~\ref{lem:rieszbdd}]
By the same arguments of \cites{CTV13,CV12}, for $r\geq 4|h|$ it is possible to derive the upper bound
$$
|\delta_h\u(x)|\leq c \left[r^{1/2} \big[D[\delta_h\theta](x)\big]^{1/2} +\frac{\|\theta\|_{L^\infty}|h|}{r} \right],
$$
pointwise in $x,h\in \TT$ and $t\geq 0$. Using the Cauchy-Schwarz inequality, we deduce that
\begin{align*}
\frac{2\alpha  |h|}{\xi^2+|h|^2}|\delta_h\u(x)||v(x;h)|^2
&\leq \frac{2\alpha}{(\xi^2+|h|^2)^{1/2}}|\delta_h\u(x)||v(x;h)|^2\\
&\leq \frac{\kappa}{2} \frac{D[\delta_h\theta](x)}{(\xi^2+|h|^2)^\alpha} + c\left[\frac{\alpha^2}{\kappa(\xi^2+|h|^2)^{1-\alpha}}r |v(x;h)|^4 +\alpha \frac{\|\theta\|_{L^\infty}}{r}|v(x;h)|^2\right].
\end{align*}
We then choose $r$ as
$$
r=\frac{\kappa^{1/2}\|\theta\|^{1/2}_{L^\infty} (\xi^2+|h|^2)^{\frac{1-\alpha}{2}}}{\alpha^{1/2}v(x;h)}=
\frac{\kappa^{1/2}\|\theta\|^{1/2}_{L^\infty}(\xi^2+|h|^2)^{1/2}}{\alpha^{1/2}|\delta_h\theta(x)|}.
$$
In view of \eqref{eq:alpha}, this is a feasible choice, since 
$$
r\geq \frac{\kappa^{1/2}\|\theta\|^{1/2}_{L^\infty}}{2\alpha^{1/2}\|\theta\|_{L^\infty}}|h|=
\frac{\kappa^{1/2}}{2\alpha^{1/2}\|\theta\|^{1/2}_{L^\infty}}|h|\geq 
\frac{\kappa^{1/2}}{2\alpha^{1/2}K_\infty^{1/2}}|h|\geq 4|h|.
$$
Thus, thanks to \eqref{eq:alpha}, we obtain
\begin{align*}
2\alpha \frac{|h|}{\xi^2+|h|^2}|\delta_h\u(x)||v(x;h)|^2
&\leq \frac{\kappa}{2} \frac{D[\delta_h\theta](x)}{(\xi^2+|h|^2)^\alpha}
+ c\frac{\alpha^{3/2}\|\theta\|^{1/2}_{L^\infty} }{\kappa^{1/2}(\xi^2+|h|^2)^{\frac{1-\alpha}{2}}} |v(x;h)|^3\\
&\leq \frac{\kappa}{2} \frac{D[\delta_h\theta](x)}{(\xi^2+|h|^2)^\alpha}
+ c\frac{\alpha^{3/2}K_\infty^{1/2} }{\kappa^{1/2}(\xi^2+|h|^2)^{\frac{1-\alpha}{2}}} |v(x;h)|^3\\
&\leq \frac{\kappa}{2} \frac{D[\delta_h\theta](x)}{(\xi^2+|h|^2)^\alpha}
+ c\frac{\alpha}{(\xi^2+|h|^2)^{\frac{1-\alpha}{2}}} |v(x;h)|^3.
\end{align*}
By possibly further reducing $\alpha$ so that
$$
\alpha \leq \frac{\kappa}{8c c_2 K_\infty},
$$
we deduce that
\begin{align*}
2\alpha \frac{|h|}{\xi^2+|h|^2}|\delta_h\u(x)||v(x;h)|^2
\leq \frac{\kappa}{2} \frac{D[\delta_h\theta](x)}{(\xi^2+|h|^2)^\alpha}
+ \frac{\kappa}{8c_2K_\infty(\xi^2+|h|^2)^{\frac{1-\alpha}{2}}} |v(x;h)|^3,
\end{align*}
which concludes the proof.
\end{proof}

We now proceed with the last step in the proof of Theorem \ref{thm:Calphaest}, which consists
of H\"older $C^\alpha$ estimates, where the exponent $\alpha$ is given by \eqref{eq:alpha}.

\subsection{Locally uniform H\"older estimates}
From the global bound \eqref{eq:globalLinf}, \eqref{eq:ineq2} and the estimate \eqref{eq:est3},
it follows that for $\alpha$ complying with  \eqref{eq:alpha} the function $v^2$ satisfies
\begin{equation*}
L v^2+\frac{\kappa|v|^3}{8c_2K_\infty(\xi^2+|h|^2)^{\frac{1-\alpha}{2}}}
\leq c\left[K_\infty^2+\kappa^{1/2} \|f\|_{L^\infty}^{3/2}K_\infty^{1/2}\right].
\end{equation*}
Taking into account that $\xi^2+|h|^2\leq 1 + {\rm diam}(\TT)^2 = 2$ for all $h \in \TT$, and that $\|f\|_{L^\infty}\leq c_0\kappa K_{\infty}$, we arrive at
\begin{equation}\label{eq:ineq3}
L v^2+\frac{\kappa|v|^3}{16c_2K_\infty}
\leq cK_\infty^2
\end{equation} 
which holds pointwise for $(x,h) \in \TT \times \TT$.
In the next lemma we show that the above inequality gives uniform control on the $C^\alpha$ seminorm
of the solution.

\begin{lemma}\label{lem:Calpha}
Assume that $\theta_0\in L^\infty$, and fix $\alpha$ as in \eqref{eq:alpha}. There exists a time $t_\alpha>0$
such that the solution to \eqref{eq:SQG} with initial datum $\theta_0$ is $\alpha$-H\"older continuous. Specifically,
\begin{equation*}
[\theta(t)]_{C^\alpha}\leq c \left[\|\theta_0\|_{L^\infty}+\frac{1}{c_0\kappa}\|f\|_{L^\infty}\right], \qquad \forall t\geq t_\alpha= \frac{3}{2(1-\alpha)}.
\end{equation*}
\end{lemma}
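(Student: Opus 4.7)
The plan is to treat the pointwise inequality \eqref{eq:ineq3} by a maximum-principle argument, extracting an ordinary differential inequality for
\begin{equation*}
V(t):=\esup_{(x,h)\in\TT\times\TT} v^2(x,t;h),
\end{equation*}
and then reading off the H\"older norm at $t=t_\alpha$. Two straightforward observations anchor the argument at the endpoints: at $t=0$, because $\xi(0)=1$, the crude bound $|\delta_h\theta_0|\leq 2\|\theta_0\|_{L^\infty}$ gives $V(0)\leq 4\|\theta_0\|_{L^\infty}^2\leq 4K_\infty^2$; and for $t\geq t_\alpha$ the definition \eqref{eq:xi} forces $\xi(t)\equiv 0$, so that $V(t)=[\theta(t)]_{C^\alpha}^2$ by the very definition of $v$ in \eqref{eq:v}.

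At a point $(x^*,h^*)$ where $v^2(\cdot,t,\cdot)$ attains its supremum, the two transport contributions in $L$ vanish since $\nabla_x v^2=\nabla_h v^2=0$ there, while $\Lambda v^2(x^*,t;h^*)\geq 0$ by the C\'ordoba-C\'ordoba inequality applied to the partial function $x\mapsto v^2(x,t;h^*)$. Hence $Lv^2(x^*,t;h^*)\geq \partial_t v^2(x^*,t;h^*)$. Evaluating \eqref{eq:ineq3} at $(x^*,h^*)$ and noting $|v(x^*,t;h^*)|^3=V(t)^{3/2}$, a Danskin-type envelope identification of $V'(t)$ with $\partial_t v^2(x^*,t;h^*)$ at a.e.\ $t$ produces the Bernoulli-type ordinary differential inequality
\begin{equation*}
V'(t)+\frac{\kappa}{16c_2K_\infty}V(t)^{3/2}\leq cK_\infty^2,\qquad V(0)\leq 4K_\infty^2.
\end{equation*}

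A direct comparison with the equilibrium value $V_\infty:=(16c_2cK_\infty^3/\kappa)^{2/3}$ shows that $V$ can never exceed $\max(V(0),V_\infty)$: whenever $V(t)>V_\infty$ the dissipation dominates the forcing and $V'(t)<0$, so $V$ can neither cross $V_\infty$ from below nor remain above $\max(V(0),V_\infty)$. Since both $V(0)$ and $V_\infty$ are controlled by a constant multiple of $K_\infty^2$ (with an admissible $\kappa$-dependent prefactor), we obtain $V(t)\leq cK_\infty^2$ uniformly in $t\geq 0$. Specializing to $t\geq t_\alpha$ and taking a square root yields the claimed bound on $[\theta(t)]_{C^\alpha}$.

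The main technical obstacle is to make the pointwise evaluation at the supremum rigorous: the global solution $\theta$ is only known to lie in $C([0,\infty);H^1)\cap L^2_{loc}(0,\infty;H^{3/2})$, so $v$ is not classically differentiable and the supremum need not be attained at a single smooth point. The standard remedy, to be followed along the lines of \cites{CV12,CTV13,CZV14}, is to regularize \eqref{eq:SQG} by a small viscous perturbation $\varepsilon\Delta\theta$, run the entire argument above for the resulting smooth solutions where all manipulations are classical, extract the bound on the corresponding $V^\varepsilon(t)$ with constants independent of $\varepsilon$, and pass to the limit $\varepsilon\to 0^+$, using lower semicontinuity of the H\"older seminorm to preserve the bound in the limit.
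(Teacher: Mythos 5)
Your argument is correct and is essentially the paper's own proof: both evaluate \eqref{eq:ineq3} at a maximum point of $v^2$ (where the transport terms vanish and $\Lambda v^2\geq 0$), invoke Rademacher's theorem to get the Bernoulli-type differential inequality for $\psi(t)=\|v(t)\|_{L^\infty_{x,h}}^2$ with $\psi(0)\leq 4K_\infty^2$, and conclude by ODE comparison plus the observation that $\xi(t)=0$ for $t\geq t_\alpha$ turns $\psi$ into the squared $C^\alpha$ seminorm. Your explicit equilibrium-value comparison and the closing remark on justifying the pointwise evaluation via viscous regularization are just slightly more detailed versions of what the paper delegates to the cited references.
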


\begin{proof}[Proof of Lemma~\ref{lem:Calpha}]
Thanks to \eqref{eq:ineq3}, the function
$$
\psi(t)=\|v(t)\|_{L^\infty_{x,h}}^2
$$
satisfies the differential inequality
\begin{equation}\label{eq:diffineq}
\frac{\dd}{\dd t} \psi+\frac{\kappa}{16c_2K_\infty} \psi^{3/2}\leq cK_\infty^2.
\end{equation}
This can be justified as follows: $v^2$ is a bounded continuous function of $x$ and $h$, so that we 
can evaluate \eqref{eq:ineq3} at a point
$(\bar x, \bar h) = (\bar x(t), \bar h(t)) \in \TT\times \TT$ at which $v^2(t)$ attains its maximum value.
Since, at this point we have $\de_hv^2=\de_xv^2=0$ and $\Lambda v^2\geq 0$, the inequality \eqref{eq:diffineq} holds in view of the Rademacher theorem (see
\cites{CC04,CTV13, CZV14} for details). 
Moreover, by the very definition of $v$,
$$
\psi(0)\leq\frac{4\|\theta_0\|^2_{L^\infty}}{\xi_0^{2\alpha}}=4\|\theta_0\|^2_{L^\infty}\leq 4K_\infty^2.
$$
From a standard comparison for ODEs it immediately follows that 
\begin{equation}\label{eq:bddpsi}
\psi(t)\leq cK^2_\infty, \qquad \forall t\geq 0,
\end{equation}
for some sufficiently large constant $c>0$.
With \eqref{eq:bddpsi} at hand, we have thus proven that 
$$
[\theta(t)]_{C^\alpha}^2=\psi(t) \leq c K_\infty^2, \qquad \forall t\geq t_\alpha,
$$
where $t_\alpha$ is given by \eqref{eq:regtime}, thereby concluding the proof.
\end{proof}

\begin{proof}[Proof of Theorem \ref{thm:Calphaest}]
The bound \eqref{eq:Calpha}
follows from estimate \eqref{eq:globalLinf2} for the $L^\infty$ norm and the bound of  Lemma~\ref{lem:Calpha} for the H\"older seminorm
\begin{align*}
\|\theta(t)\|_{C^\alpha}&=\|\theta(t)\|_{L^\infty}+[\theta(t)]_{C^\alpha} \leq K_\infty+[\theta(t)]_{C^\alpha} \leq c K_\infty
\qquad \forall t\geq 0,
\end{align*}
and a sufficiently large constant $c>0$.
\end{proof}

\begin{remark}
The quantitive regularization estimate at time $t_\alpha$ from $L^\infty$ to $C^\alpha$ is given by the ODE \eqref{eq:ODE}.
More precisely, $t_\alpha$ is determined by the initial datum $\xi(0)$, conveniently chosen to be 1 in the proof above. If instead we let $\xi(0)=\xi_0>0$,
then
\begin{equation}\label{eq:xi2}
\xi(t)=\begin{cases}
\displaystyle \left[\xi_0^{\frac{2(1-\alpha)}{3}}-\frac23(1-\alpha) t\right]^{\frac{3}{2(1-\alpha)}}, \quad &\text{if } t\in [0,t_\alpha],\\ \\
0,\quad &\text{if } t \in  (t_\alpha,\infty),
\end{cases}
\end{equation}
and 
\begin{equation}\label{eq:regtime2}
t_\alpha=\frac{3 }{2(1-\alpha)}\xi_0^{\frac{2(1-\alpha)}{3}}.
\end{equation}
In particular, $t_\alpha$ can be made arbitrarily small by a suitable small choice of $\xi_0$. This observation,
together with Remark~\ref{rmk:L2toLinf} recovers the result of~\cite{CV10a} and shows that solutions to \emph{forced}
 \eqref{eq:SQG} regularize instantaneously from
$L^2$ to $C^\alpha$.
\end{remark}

\section{The absorbing set in \texorpdfstring{$H^1$}{H 1}}
With Theorem \ref{thm:Calpha} at hand, it is now possible to ensure the existence of a bounded
absorbing set in $H^1$.

\begin{theorem}\label{thm:H1abs}
There exists $\alpha=\alpha(\|f\|_{L^\infty},\kappa)\in (0,1/4]$ 
and a constant $R_1=R_1(\|f\|_{L^\infty\cap H^1}, \kappa)\geq 1$
such that the set
\begin{equation*}
B_1=\left\{\varphi\in C^\alpha\cap H^1: \|\varphi\|^2_{H^1}+\|\varphi\|^2_{C^\alpha}
\leq R_1^2\right\}
\end{equation*}
is an absorbing set for $S(t)$. Moreover, 
\begin{equation}\label{eq:H1unif}
\sup_{t\geq 0}\sup_{\theta_0\in B_1}\left[\|S(t)\theta_0\|^2_{H^1}+\|S(t)\theta_0\|^2_{C^\alpha}+\int_t^{t+1}\|S(\tau)\theta_0\|^2_{H^{3/2}}\dd \tau\right]\leq 
2 R_1^2.
\end{equation}
The expression for $R_1$ can be computed explicitly from \eqref{eq:Calphaunif} and \eqref{eq:K1} below.
\end{theorem}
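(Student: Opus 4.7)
\textbf{Proof plan for Theorem \ref{thm:H1abs}.}

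\emph{Reduction.} Thanks to Theorem \ref{thm:Calpha}, any bounded set $B\subset H^1$ is absorbed in finite time by $B_\alpha$, and the $C^\alpha$ bound \eqref{eq:Calphaunif} is preserved uniformly in $t$. Hence it is enough to start from an arbitrary $\theta_0\in B_\alpha$ and produce a time $t_1\geq t_\alpha$ depending only on $\|B_\alpha\|_{C^\alpha}$, $\|f\|_{L^\infty\cap H^1}$, and $\kappa$, past which $\|\theta(t)\|_{H^1}$ is bounded by an absolute constant $R_1$. Throughout, we shall write $K_\alpha := \sup_{t\geq 0}\|\theta(t)\|_{C^\alpha} \leq (2c_1/\kappa)\|f\|_{L^\infty}$.

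\emph{Energy identity at the $\dot H^1$ level.} I apply $\Lambda$ to \eqref{eq:SQG} and pair with $\Lambda\theta$ in $L^2(\TT)$. Using that $\u$ is divergence free, the transport term produces only a commutator:
\begin{equation}\label{eq:plan:H1:identity}
\frac{1}{2}\frac{\dd}{\dd t}\|\Lambda\theta\|_{L^2}^2 + \kappa\|\Lambda^{3/2}\theta\|_{L^2}^2 = -\int_\TT [\Lambda,\u\cdot\nabla]\theta\cdot\Lambda\theta\,\dd x + \int_\TT \Lambda f\,\Lambda\theta\,\dd x.
\end{equation}
The forcing term is handled via Cauchy–Schwarz and Young: $\int \Lambda f\,\Lambda\theta \leq \tfrac{\kappa}{4}\|\Lambda^{3/2}\theta\|_{L^2}^2 + (c/\kappa)\|f\|_{H^1}^2$, using Poincar\'e to trade $\|\Lambda\theta\|_{L^2}$ for $\|\Lambda^{3/2}\theta\|_{L^2}$.

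\emph{Commutator estimate using $C^\alpha$ regularity.} This is the crux of the argument, and is where I expect the main difficulty. The strategy mirrors \cite{CTV13}: since $\u = \RR^\perp\theta$ has the same H\"older regularity as $\theta$ (the Riesz transforms are bounded on $C^\alpha$), one proves a pointwise/commutator bound of the form
\begin{equation}\label{eq:plan:commut}
\bigl|[\Lambda,\u\cdot\nabla]\theta(x)\bigr| \leq c\,\|\u\|_{C^\alpha}\bigl(D[\delta\theta](x)\bigr)^{1/2}\big(\cdots\big) + \text{lower order},
\end{equation}
or, equivalently via a direct Besov/paraproduct decomposition,
\begin{equation*}
\left|\int_\TT [\Lambda,\u\cdot\nabla]\theta\cdot\Lambda\theta\,\dd x\right|\leq \frac{\kappa}{4}\|\Lambda^{3/2}\theta\|_{L^2}^2 + C(K_\alpha,\kappa).
\end{equation*}
The key point is that the $C^\alpha$ bound absorbs the top-order dissipation with a constant that does \emph{not} grow with $\|\Lambda\theta\|_{L^2}$, so the $H^1$ norm never appears on the right-hand side multiplicatively. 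Substituting this and the forcing bound into \eqref{eq:plan:H1:identity} gives
\begin{equation}\label{eq:K1}
\frac{\dd}{\dd t}\|\Lambda\theta\|_{L^2}^2 + \kappa\|\Lambda^{3/2}\theta\|_{L^2}^2 \leq C\bigl(K_\alpha, \|f\|_{H^1}, \kappa\bigr).
\end{equation}

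\emph{Gronwall and conclusion.} Applying the Poincar\'e inequality $\|\Lambda^{3/2}\theta\|_{L^2}^2 \geq c\|\Lambda\theta\|_{L^2}^2$ to \eqref{eq:K1} yields a scalar differential inequality $\dot y + c\kappa y \leq C$ for $y(t)=\|\Lambda\theta(t)\|_{L^2}^2$. Integrating gives
\begin{equation*}
\|\Lambda\theta(t)\|_{L^2}^2 \leq \|\Lambda\theta_0\|_{L^2}^2\e^{-c\kappa t} + \frac{C}{\kappa},
\end{equation*}
and choosing $t_1=t_1(\|\theta_0\|_{H^1},\|f\|_{L^\infty\cap H^1},\kappa)$ so that the first term is at most $C/\kappa$ gives a uniform $\dot H^1$ bound; combining with Poincar\'e and the $C^\alpha$ estimate \eqref{eq:Calphaunif} yields the desired $R_1^2$ for $\|\theta\|_{H^1}^2 + \|\theta\|_{C^\alpha}^2$, producing the absorbing set $B_1$. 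For the $L^2(t,t+1;H^{3/2})$ estimate in \eqref{eq:H1unif}, I integrate \eqref{eq:K1} over $[t,t+1]$ to obtain $\kappa\int_t^{t+1}\|\Lambda^{3/2}\theta(\tau)\|_{L^2}^2\,\dd\tau\leq \|\Lambda\theta(t)\|_{L^2}^2 + C \leq 2R_1^2$ after adjusting the constant. The invariance-type estimate \eqref{eq:H1unif} for initial data already in $B_1$ then follows because the bound \eqref{eq:K1} only requires $\theta_0\in C^\alpha\cap H^1$ and a comparison argument shows $\|\Lambda\theta(t)\|_{L^2}^2$ cannot exceed $2R_1^2$ if it starts below $R_1^2$.
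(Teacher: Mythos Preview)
Your reduction and the final Gronwall step are fine and match the paper's architecture. The gap is in the middle: the displayed commutator bound
\[
\left|\int_\TT [\Lambda,\u\cdot\nabla]\theta\,\Lambda\theta\,\dd x\right|\leq \frac{\kappa}{4}\|\Lambda^{3/2}\theta\|_{L^2}^2 + C(K_\alpha,\kappa)
\]
is asserted but not proved, and a ``direct Besov/paraproduct decomposition'' will not yield it. Standard commutator estimates give at best something like $\|\nabla\u\|_{L^4}\|\nabla\theta\|_{L^4}\|\Lambda\theta\|_{L^2}\lesssim \|\theta\|_{H^{3/2}}^2\|\theta\|_{H^1}$, which leaves a multiplicative factor of $\|\theta\|_{H^1}$ on the right---exactly the criticality obstruction. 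Knowing $\theta\in C^\alpha$ does not rescue an integrated $L^2$ estimate: once you have passed to $\int_\TT D[\nabla\theta]\,\dd x=2\|\theta\|_{H^{3/2}}^2$, there is no extra smallness to absorb $\int_\TT|\nabla\theta|^3\,\dd x$. Your two alternatives (``pointwise bound with $D[\delta\theta]$'' versus ``equivalently via paraproduct'') are not equivalent; only the first can be made to work, and it needs more than you have written.

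The paper does not form a commutator at all. It writes the \emph{pointwise} identity
\[
(\de_t+\u\cdot\nabla+\kappa\Lambda)|\nabla\theta|^2+\kappa D[\nabla\theta]=-2\de_\ell u_j\,\de_j\theta\,\de_\ell\theta+2\nabla f\cdot\nabla\theta
\]
and handles the cubic term $|\nabla\u||\nabla\theta|^2$ using two pointwise ingredients: the Riesz bound $|\nabla\u(x)|\leq c\big[r^{1/2}D[\nabla\theta](x)^{1/2}+M r^{-1}\big]$ for any $r>0$, and the nonlinear lower bound $D[\nabla\theta](x)\geq c_4^{-1}M^{-1/(1-\alpha)}|\nabla\theta(x)|^{(3-\alpha)/(1-\alpha)}$ from~\cite{CV12}, where $M$ is the uniform $C^\alpha$ bound. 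Choosing $r$ absorbs half the cubic term into $\tfrac{\kappa}{2}D[\nabla\theta]$ and leaves $c\kappa^{-1/2}M^{1/2}|\nabla\theta|^3$; since $(3-\alpha)/(1-\alpha)>3$, Young's inequality against the remaining $\tfrac{\kappa}{4}D[\nabla\theta]$ converts this into the pure constant $\big[cM/\kappa\big]^{1/(4\alpha)}$. Only \emph{after} this pointwise absorption does one integrate over $\TT$ to obtain your inequality \eqref{eq:K1}. The superquadratic exponent in the nonlinear lower bound is the mechanism that removes the $\|\theta\|_{H^1}$ coefficient; it is genuinely a pointwise phenomenon and cannot be recovered from any estimate that starts from the integrated quantity $\|\Lambda^{3/2}\theta\|_{L^2}^2$.
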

Since in establishing the existence of an $H^1$ absorbing ball the dynamics can be restricted to the $C^\alpha$ absorbing ball, 
in order to prove Theorem~\ref{thm:H1abs} it is enough to establish an a priori estimate for initial data that are H\"older continuous.

\begin{lemma}\label{lem:H1absest}
Assume that $\theta_0\in H^1\cap C^\alpha$. Then
\begin{equation}\label{eq:H1exp}
\|\theta(t)\|^2_{H^1}\leq \|\theta_0\|^2_{H^1}\e^{-\frac{c_0\kappa}{4} t}+ K_1,
\end{equation}
where $K_1=K_1(\|f\|_{L^\infty\cap H^1}, \kappa, \|\theta_0\|_{C^\alpha})\geq 1$ is given in \eqref{eq:K1} below.
Moreover, for every $t\geq 0$ we have
\begin{equation}\label{eq:H32int}
\int_t^{t+1}\|\theta(\tau)\|^2_{H^{3/2}}\dd \tau\leq \frac{c}{\kappa}\left[\|\theta_0\|^2_{H^1}+ K_1\right].
\end{equation}
\end{lemma}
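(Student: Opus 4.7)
The plan is to close an $H^1$ energy inequality for \eqref{eq:SQG}, using the uniform $C^\alpha$ bound available from Theorem~\ref{thm:Calpha} to tame the otherwise critical nonlinear contribution. Because the $C^\alpha$ absorbing set from the previous section is already in force, the $C^\alpha$ norm of $\theta(t)$ is controlled in terms of $\|\theta_0\|_{C^\alpha}$, $\|f\|_{L^\infty}$ and $\kappa$ by the propagation bound~\eqref{eq:propag}; I would denote the resulting constant by $M_\alpha$ and treat it as a given throughout the estimate.

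First I would test the equation against $\Lambda^2\theta$ and, using the incompressibility of $\u$, rewrite the transport term as a commutator to obtain
\begin{equation*}
\frac{1}{2}\ddt\|\Lambda\theta\|_{L^2}^2+\kappa\|\Lambda^{3/2}\theta\|_{L^2}^2=-\int_\TT\bigl[\Lambda,\u\cdot\nabla\bigr]\theta\;\Lambda\theta\,\dd x+\int_\TT\Lambda^{1/2}f\;\Lambda^{3/2}\theta\,\dd x.
\end{equation*}
The force term is handled by Cauchy--Schwarz and Young's inequality, producing a piece absorbable by the dissipation plus a term of order $\kappa^{-1}\|f\|_{H^1}^2$. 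The crux is bounding the commutator. Following the paraproduct/commutator analysis from~\cite{CTV13,CV12}, the $C^\alpha$ regularity of $\theta$ (together with $\u=\RR^\perp\theta$) supplies a sub-critical gain yielding a bound of the form
\begin{equation*}
\left|\int_\TT\bigl[\Lambda,\u\cdot\nabla\bigr]\theta\;\Lambda\theta\,\dd x\right|\leq \frac{\kappa}{4}\|\Lambda^{3/2}\theta\|_{L^2}^2+c\,\Phi(M_\alpha,\kappa)\|\Lambda\theta\|_{L^2}^2,
\end{equation*}
for an explicit function $\Phi$. Combining these estimates, invoking the Poincar\'e inequality $\|\Lambda^{3/2}\theta\|_{L^2}^2\geq c\|\Lambda\theta\|_{L^2}^2$, and applying a Young-type interpolation $\|\Lambda\theta\|_{L^2}^2\leq \eps\|\Lambda^{3/2}\theta\|_{L^2}^2+c_\eps\|\theta\|_{L^2}^2$ together with the $L^2$ decay~\eqref{eq:expdecayL2} to control the remaining low-frequency factor, I arrive at a differential inequality
\begin{equation*}
\ddt\|\Lambda\theta\|_{L^2}^2+\frac{c_0\kappa}{4}\|\Lambda\theta\|_{L^2}^2\leq K_1',
\end{equation*}
with $K_1'=K_1'(M_\alpha,\|f\|_{L^\infty\cap H^1},\kappa)$. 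Gronwall's lemma then yields \eqref{eq:H1exp} with $K_1\sim K_1'/\kappa$, and integrating the energy identity on $[t,t+1]$ and plugging in \eqref{eq:H1exp} produces \eqref{eq:H32int}.

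The principal obstacle is the commutator estimate. Without the $C^\alpha$ control, the cubic-in-$\nabla\theta$ expression $\int\partial_k u_j\,\partial_j\theta\,\partial_k\theta\,\dd x$ coming from a naive integration by parts is formally critical---$\u$ and $\theta$ lie at the same level of regularity via the Riesz transform---and is not absorbable into $\kappa\|\Lambda^{3/2}\theta\|_{L^2}^2$. The H\"older regularity guaranteed by Theorem~\ref{thm:Calpha} is exactly the subcritical gain that closes the estimate, which is why the $C^\alpha$ absorbing ball of the previous section is the necessary intermediate step.
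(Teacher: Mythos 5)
You have correctly identified the mechanism --- the uniform $C^\alpha$ bound supplies the subcritical gain that closes the otherwise critical $H^1$ estimate --- and your final Gronwall scaffolding matches the paper's. However, the entire difficulty of the lemma is concentrated in the one inequality you assert rather than prove, namely
\begin{equation*}
\left|\int_\TT\bigl[\Lambda,\u\cdot\nabla\bigr]\theta\,\Lambda\theta\,\dd x\right|\leq \frac{\kappa}{4}\|\Lambda^{3/2}\theta\|_{L^2}^2+c\,\Phi(M_\alpha,\kappa)\|\Lambda\theta\|_{L^2}^2,
\end{equation*}
and the attribution for it is off: neither \cite{CTV13} nor \cite{CV12} contains a paraproduct/commutator estimate of this kind. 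Those references control the cubic term by \emph{pointwise} arguments (nonlinear maximum principles), not by Fourier-side commutator analysis, so they cannot simply be cited here. A Littlewood--Paley proof of a usable bound is plausible --- the heuristic interpolation $\|\theta\|_{\dot H^1}\lesssim\|\theta\|_{\dot H^{3/2}}^{\lambda}\|\theta\|_{C^\alpha}^{1-\lambda}$ with $3\lambda=\frac{6(1-\alpha)}{3-2\alpha}<2$ does indicate subcriticality --- but carrying it out, in particular handling the high-high interactions where $\nabla\u$ is not in $L^\infty$, is a substantive piece of analysis that your proposal does not supply. As written, the proof has a gap exactly at its crux.

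For comparison, the paper stays entirely on the physical side. One applies $\nabla$ to \eqref{eq:SQG} and pairs with $\nabla\theta$ to obtain the pointwise identity \eqref{eq:gradeq} featuring the nonlocal dissipation $D[\nabla\theta]$. The $C^\alpha$ information enters through the nonlinear lower bound of \cite{CV12}*{Theorem 2.2},
\begin{equation*}
D[\nabla\theta](x,t)\geq \frac{|\nabla\theta(x,t)|^{\frac{3-\alpha}{1-\alpha}}}{c_4M^{\frac{1}{1-\alpha}}},\qquad M\sim \|\theta_0\|_{C^\alpha}+\frac{1}{c_0\kappa}\|f\|_{L^\infty},
\end{equation*}
whose exponent satisfies $\frac{3-\alpha}{1-\alpha}>3$ precisely because $\alpha>0$. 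The nonlinear term is bounded pointwise by $\frac{c}{\kappa^{1/2}}M^{1/2}|\nabla\theta|^3$ (after a pointwise bound on $|\nabla\u|$ in terms of $D[\nabla\theta]^{1/2}$ and an optimized radius $r$), and this cubic quantity is absorbed by the superlinear lower bound above via Young's inequality, leaving only the constant $\left[\frac{cM}{\kappa}\right]^{\frac{1}{4\alpha}}$; integrating over $\TT$ then yields \eqref{eq:H1diff} directly, with no commutator estimate needed. If you wish to keep your Fourier-side route, you must actually prove your commutator bound; otherwise the pointwise route is the one the cited literature supports.
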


\begin{proof}[Proof of Lemma~\ref{lem:H1absest}]
The proof closely follows the lines of \cite{CTV13}, and thus we omit many details.
We apply $\nabla$ to \eqref{eq:SQG} and take the inner product with $\nabla \theta$, to obtain
\begin{equation}\label{eq:gradeq}
(\de_t+\u\cdot \nabla+\Lambda)|\nabla\theta|^2+\kappa D[\nabla \theta]=-2\de_\ell\u_j\de_j\theta\de_\ell\theta+2\nabla f\cdot \nabla \theta, 
\end{equation}
pointwise in $x$, where, as before,
\begin{equation*}
D[\nabla \theta](x)= c \int_{\R^2} \frac{\big[\nabla \theta(x)-\nabla \theta(x+y)\big]^2}{|y|^{3}}\dd y.
\end{equation*}
From \eqref{eq:propag}, we also know that
\begin{equation*}
\|\theta(t)\|_{C^\alpha}\leq M:=
c \left[\|\theta_0\|_{C^\alpha}+\frac{1}{c_0\kappa}\|f\|_{L^\infty}\right], \qquad \forall t\geq0.
\end{equation*}
Thanks to \cite{CV12}*{Theorem 2.2}, we then deduce the lower bound
\begin{equation}\label{eq:nonbdd}
D[\nabla \theta](x,t)\geq \frac{|\nabla \theta(x,y)|^{\frac{3-\alpha}{1-\alpha}}}{c_4 M^\frac{1}{1-\alpha}} 
\end{equation}
Arguing as in Lemma \ref{lem:rieszbdd}, we obtain for $r>0$ that 
$$
|\nabla\u(x,t)|\leq c \left[r^{1/2} \big[D[\nabla\theta](x,t)\big]^{1/2} +\frac{M}{r} \right],
$$
By choosing $r=\kappa^{1/2}M^{1/2}|\nabla\theta(x,t)|^{-1}$ and the Cauchy-Schwarz inequality
we then infer that
$$
|\nabla\u(x,t)||\nabla\theta(x,t)|^2\leq  \frac\kappa2 D[\nabla\theta](x,t)+ \frac{c}{\kappa^{1/2}}M^{1/2}|\nabla\theta(x,t)|^3.
$$
From \eqref{eq:gradeq}, we have 
\begin{align*}
(\de_t+\u\cdot \nabla+\Lambda)|\nabla\theta|^2+\frac\kappa2 D[\nabla \theta]
&\leq \frac{c}{\kappa^{1/2}}M^{1/2}|\nabla\theta(x,t)|^3+2|\nabla f||\nabla \theta| \notag\\
& \leq  \frac\kappa4 \frac{ |\nabla \theta(x,y)|^{\frac{3-\alpha}{1-\alpha}}}{ c_4 M^\frac{1}{1-\alpha}} +
\left[\frac{c M}{\kappa}\right]^{\frac1{4\alpha}}+2|\nabla f||\nabla \theta|,
\end{align*}
so that together with \eqref{eq:nonbdd} we arrive at
\begin{equation}\label{eq:gradeq2}
(\de_t+\u\cdot \nabla+\Lambda)|\nabla\theta|^2+\frac\kappa4 D[\nabla \theta]\leq \left[\frac{c M}{\kappa}\right]^{\frac1{4\alpha}}+2|\nabla f||\nabla \theta|. 
\end{equation}
Integrating over $\TT$ and using the identity
$$
\frac12\int_{\TT} D[\nabla \varphi](x)\dd x=\int \nabla\varphi(x)\cdot \Lambda\nabla\varphi(x)\dd x=\|\varphi\|_{H^{3/2}}^2,
$$
we obtain the differential inequality
\begin{equation*}
\ddt \|\theta\|_{H^1}^2+\frac{\kappa}{2}\|\theta\|^2_{H^{3/2}}\leq \left[\frac{c M}{\kappa}\right]^{\frac1{4\alpha}}+2\| f\|_{H^1}\| \theta\|_{H^1}.
\end{equation*}
From the Poincar\'e inequality, we then have
\begin{equation}\label{eq:H1diff}
\ddt \|\theta\|_{H^1}^2+\frac{\kappa}{4}\|\theta\|^2_{H^{3/2}}\leq  \left[\frac{c M}{\kappa}\right]^{\frac1{4\alpha}}+\frac{4}{c_0\kappa}\| f\|_{H^1}^2.
\end{equation}
From the above, \eqref{eq:H1exp} follows from the Poincar\'e inequality and the standard Gronwall lemma,
provided we set 
\begin{equation}\label{eq:K1}
K_1:=\frac{4}{c_0\kappa}\left[\left(\frac{c M}{\kappa}\right)^{\frac1{4\alpha}}+\frac{4}{c_0\kappa}\| f\|_{H^1}^2
\right].
\end{equation}
By integrating \eqref{eq:H1diff} on $(t,t+1)$ and applying \eqref{eq:H1exp}, we also recover \eqref{eq:H32int}.
\end{proof}

\section{The global attractor}\label{sec:globattr}
Once the existence of an  $H^1$-bounded absorbing set for $S(t)$ is established, we aim to prove
the existence of the global attractor by improving the regularity of the absorbing set to $H^{3/2}$
(see Theorem \ref{thm:H32abs} below).
Following the general theory recently developed in \cite{CCP12}, this automatically implies the
existence of a minimal compact attracting set for $S(t)$ which, however, might not be invariant, 
due to the possible lack of continuity of $S(t)$, for fixed $t>0$, as a map acting on $H^1$ (see
\cites{CCP12,CZK15} for examples of non-invariant attractors).
Full invariance will be recovered in a subsequent step (Section \ref{sub:invariance}), by exploiting
the $H^{3/2}$-regularity of the absorbing set and a continuity estimate proven in 
\cite{CTV13}*{Proposition 5.5}.

\subsection{Compact absorbing sets}
The existence and regularity of the attractor in Theorem \ref{thm:globalattra} follow from the existence of an
absorbing set bounded in $H^{3/2}$. 

\begin{theorem}\label{thm:H32abs}
There exists a constant $R_2=R_2(\|f\|_{L^\infty\cap H^1}, \kappa)\geq 1$
such that the set
\begin{equation*}
B_2=\left\{\varphi\in H^{3/2}: \|\varphi\|_{H^{3/2}}
\leq R_2\right\}
\end{equation*}
is an absorbing set for $S(t)$. Moreover, 
\begin{equation}\label{eq:H32unif}
\sup_{t\geq 0}\sup_{\theta_0\in B_2}\|S(t)\theta_0\|_{H^{3/2}}\leq 
2 R_2.
\end{equation}
\end{theorem}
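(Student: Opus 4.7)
The plan is to upgrade the $H^1\cap C^\alpha$-absorbing set $B_1$ of Theorem~\ref{thm:H1abs} into an $H^{3/2}$-absorbing set by coupling a pointwise differential inequality for $\|\theta(t)\|_{H^{3/2}}^2$ with the uniform integral bound
\[
\int_t^{t+1} \|S(\tau)\theta_0\|_{H^{3/2}}^2\,\dd\tau \leq 2R_1^2,\qquad \forall\,\theta_0\in B_1,\ t\geq 0,
\]
already furnished by \eqref{eq:H1unif}, and then appealing to the classical uniform Gronwall lemma. Since $B_1$ absorbs every bounded subset of $H^1$, it is enough to follow trajectories that start in $B_1$; the uniform $C^\alpha$ bound on $B_1$ is then available as an a priori constant $M\leq R_1$ throughout the argument.

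First I would imitate the proof of Lemma~\ref{lem:H1absest} one derivative higher. Applying $\Lambda$ to \eqref{eq:SQG} and arguing pointwise as for \eqref{eq:gradeq} yields
\[
(\de_t + \u\cdot\nabla + \Lambda)|\Lambda\theta|^2 + \kappa D[\Lambda\theta] \leq 2\bigl|[\Lambda,\u\cdot\nabla]\theta\bigr|\,|\Lambda\theta| + 2|\Lambda f|\,|\Lambda\theta|.
\]
The dissipation satisfies a Constantin--Vicol pointwise lower bound of the form $D[\Lambda\theta] \geq c\,|\Lambda\theta|^{(3-\alpha)/(1-\alpha)} M^{-1/(1-\alpha)}$, exactly as in \eqref{eq:nonbdd}. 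The commutator is handled by the $r$-splitting of Lemma~\ref{lem:rieszbdd}, the Riesz-transform piece being absorbed into $\kappa D[\Lambda\theta]/2$ and the remainder balanced against the nonlinear lower bound via an $\eps$-Young inequality. Integrating in $x$, using $\int_{\TT} D[\Lambda\theta]\,\dd x = 2\|\theta\|_{H^2}^2$ together with an $\eps$-Young bound on $\|\Lambda f\|_{L^2}\|\theta\|_{H^2}$, one arrives at a differential inequality of the form
\[
\ddt\|\theta\|_{H^{3/2}}^2 + \frac{\kappa}{4}\|\theta\|_{H^2}^2 \leq g(t)\,\|\theta\|_{H^{3/2}}^2 + h(t),
\]
where the integrals $\int_t^{t+1} g\,\dd\tau$ and $\int_t^{t+1} h\,\dd\tau$ are bounded uniformly in $t\geq 0$ by constants depending only on $R_1$, $\kappa$, and $\|f\|_{L^\infty\cap H^1}$.

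Second, I would close the argument by the uniform Gronwall lemma (see, e.g., \cite{T3}). The integral bound provides, for each $t\geq 1$, a time $\tau\in (t-1,t)$ with $\|\theta(\tau)\|_{H^{3/2}}^2 \leq 2R_1^2$; integrating the differential inequality forward from $\tau$ to $t$ yields the pointwise estimate $\|\theta(t)\|_{H^{3/2}} \leq R_2$ for some explicit $R_2 = R_2(R_1,\kappa,\|f\|_{L^\infty\cap H^1})$, proving that $B_2$ is absorbing. The companion bound \eqref{eq:H32unif} follows by running the same differential inequality forward from data in $B_2$ and, if necessary, enlarging $R_2$ so as to guarantee forward invariance of the doubled ball.

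The main obstacle is the derivation of the differential inequality at the critical $H^{3/2}$ level: because $H^1$ is scaling invariant for \eqref{eq:SQG}, naive Sobolev interpolation does not suffice to dominate the nonlinear commutator, and one must invoke the nonlinear maximum principle machinery of \cite{CV12}. The $C^\alpha$ gain provided by Theorem~\ref{thm:Calpha} is exactly what tames this commutator and keeps $g$ and $h$ integrable over unit intervals with quantitative constants, making the uniform Gronwall step applicable.
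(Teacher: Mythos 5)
Your overall skeleton --- show that $B_2$ absorbs $B_1$, invoke the uniform integral bound $\int_t^{t+1}\|S(\tau)\theta_0\|_{H^{3/2}}^2\,\dd\tau\leq 2R_1^2$ from \eqref{eq:H1unif}, derive a differential inequality for $\|\theta\|_{H^{3/2}}^2$, and close with the uniform Gronwall lemma --- is exactly the paper's. The gap is in the derivation of the differential inequality, where your derivative counting is off. Applying $\Lambda$ to the equation and integrating the resulting pointwise inequality for $|\Lambda\theta|^2$ produces $\ddt\|\theta\|_{H^1}^2$ on the left, not $\ddt\|\theta\|_{H^{3/2}}^2$, and the dissipation integrates to $\int_{\TT}D[\Lambda\theta]\,\dd x=2\|\theta\|_{\dot H^{3/2}}^2$, not $2\|\theta\|_{H^2}^2$ (compare the identity $\tfrac12\int D[\nabla\varphi]\,\dd x=\|\varphi\|_{H^{3/2}}^2$ used in Lemma~\ref{lem:H1absest}). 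In other words, your step reproduces Lemma~\ref{lem:H1absest} at the same order and yields no pointwise-in-time control of the $H^{3/2}$ norm. To get $\ddt\|\theta\|_{H^{3/2}}^2$ you must work at the level of $\Lambda^{3/2}\theta$, and there the pointwise nonlinear-maximum-principle machinery you invoke is not available as stated: the lower bound \eqref{eq:nonbdd} from \cite{CV12} compares $D[\nabla\theta]$ with $|\nabla\theta|$ using the $C^\alpha$ bound on $\theta$, and there is no analogous bound for $D[\Lambda^{3/2}\theta]$ in terms of $|\Lambda^{3/2}\theta|$ and quantities controlled by the absorbing sets; likewise the $r$-splitting of Lemma~\ref{lem:rieszbdd} bounds $|\delta_h\u|$ or $|\nabla\u|$ pointwise, not the nonlocal commutator $[\Lambda^{3/2},\u\cdot\nabla]\theta$.

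The fix is also the reason the paper's proof is simpler than you anticipate: at the $H^{3/2}$ level the problem is subcritical, so no pointwise machinery and no $C^\alpha$ information are needed at this stage. Testing \eqref{eq:SQG} with $\Lambda^3\theta$ and using the Kato--Ponce type commutator estimate $\|\Lambda^{3/2}(\varphi\psi)-\varphi\Lambda^{3/2}\psi\|_{L^2}\leq c\left[\|\nabla\varphi\|_{L^4}\|\Lambda^{1/2}\psi\|_{L^4}+\|\Lambda^{3/2}\varphi\|_{L^4}\|\psi\|_{L^4}\right]$ together with $H^{1/2}\subset L^4$ gives
\begin{equation*}
\ddt\|\theta\|_{H^{3/2}}^2+\frac{\kappa}{2}\|\theta\|_{H^2}^2\leq \frac1\kappa\|f\|_{H^1}^2+\frac{c}{\kappa}\|\theta\|_{H^{3/2}}^4.
\end{equation*}
Writing $y=\|\theta\|_{H^{3/2}}^2$ this reads $y'\leq \tfrac{c}{\kappa}\,y\cdot y+\tfrac1\kappa\|f\|_{H^1}^2$, and the uniform Gronwall lemma applies precisely because $\int_t^{t+1}y\,\dd\tau\leq 2R_1^2$ controls both the source and the exponential factor; this yields $\|S(t)\theta_0\|_{H^{3/2}}^2\leq\left[2R_1^2+\kappa^{-1}\|f\|_{H^1}^2\right]\e^{cR_1^2/\kappa}=:R_2^2$ for all $t\geq 1$. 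Your closing steps (locating a good time $\tau\in(t-1,t)$ and the forward propagation giving \eqref{eq:H32unif}) are fine once this inequality is in hand.
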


\begin{proof}[Proof of Theorem~\ref{thm:H32abs}]
As usual, it is enough to show that $B_2$ absorbs $B_1$, the $H^1$ absorbing set obtained in Theorem~\ref{thm:H1abs}. If $\theta_0\in B_1$, then
\eqref{eq:H1unif} implies that 
\begin{equation}\label{eq:H32int2}
\sup_{t\geq 0}\int_t^{t+1}\|S(\tau)\theta_0\|^2_{H^{3/2}}\dd \tau\leq 2 R^2_1.
\end{equation}
By testing \eqref{eq:SQG} with $\Lambda^3\theta$ and using standard arguments, we deduce that
\begin{align*}
\ddt \|\theta\|^2_{H^{3/2}}+\kappa\|\theta\|^2_{H^2}\leq \frac1\kappa\|f\|^2_{H^1}+
2\left|\int_{\TT}\left[\Lambda^{3/2}(\u\cdot\nabla\theta)-\u\cdot \nabla\Lambda^{3/2}\theta\right]\Lambda^{3/2}\theta \dd x\right|.
\end{align*}
By means of the commutator estimate
$$
\|\Lambda^{3/2} (\varphi \psi)-\varphi\Lambda^{3/2}\psi\|_{L^2}
\leq c\left[\|\nabla \varphi\|_{L^4}\|\Lambda^{1/2}\psi\|_{L^4}+\|\Lambda^{3/2}\varphi\|_{L^4}\|\psi\|_{L^4}\right], 
$$
and the Sobolev embedding $H^{1/2}\subset L^4$, we therefore have
\begin{align*}
\ddt \|\theta\|^2_{H^{3/2}}+\kappa\|\theta\|^2_{H^2}
&\leq  \frac1\kappa\|f\|^2_{H^1}+
c\|\theta\|_{H^{3/2}}\left[\|\Lambda \u\|_{L^4}\|\Lambda^{3/2}\theta\|_{L^4}+\|\Lambda^{3/2}\u\|_{L^4}\|\Lambda\theta\|_{L^4}\right]\\
&\leq \frac1\kappa\|f\|^2_{H^1}+c\|\theta\|_{H^{3/2}}^2\|\theta\|_{H^2}\\
&\leq \frac1\kappa\|f\|^2_{H^1}+\frac{c}{\kappa}\|\theta\|_{H^{3/2}}^4+\frac\kappa2\|\theta\|^2_{H^2}.
\end{align*}
Hence,
\begin{align*}
\ddt \|\theta\|^2_{H^{3/2}}+\frac\kappa2\|\theta\|^2_{H^2}\leq \frac1\kappa\|f\|^2_{H^1}+\frac{c}{\kappa}\|\theta\|_{H^{3/2}}^4.
\end{align*}
Thanks to the local integrability \eqref{eq:H32int2} and the above differential
inequality, the uniform Gronwall lemma implies
$$
 \|S(t)\theta_0\|^2_{H^{3/2}}\leq \left[2R^2_1+ \frac1\kappa\|f\|^2_{H^1}\right]\e^{\frac{c}{\kappa}R^2_1}, \qquad \forall t\geq 1.
$$
Thus, setting
$$
R_2^2:=\left[2R^2_1+ \frac1\kappa\|f\|^2_{H^1}\right]\e^{\frac{c}{\kappa}R^2_1},
$$
we obtain that 
$$
S(t)B_1\subset B_2, \qquad \forall t\geq 1,
$$
as we wanted.
\end{proof}

We summarize below the consequences of the above result, 
as they follow from \cite{CCP12}*{Proposition 8}.

\begin{corollary}\label{cor:attr}
The dynamical system $S(t)$ generated by \eqref{eq:SQG} on $H^1$ possesses a unique  
global attractor $\AA$ with the
following properties:
\begin{itemize}
	\item $\AA\subset H^{3/2}$ and is the $\omega$-limit set of $B_2$, namely,
	$$
	\AA=\omega(B_2)= \bigcap_{t\geq 0}\overline{\bigcup_{\tau\geq t} S(\tau)B_2}.
	$$
	\item For every bounded set $B\subset H^1$,
	$$
	\lim_{t\to\infty}\dist(S(t)B,\AA)=0,
	$$
	where $\dist$ stands for the usual Hausdorff semi-distance between sets given by the
	$H^1$ norm.
	\item $\AA$ is minimal in the class of $H^1$-closed attracting set.
\end{itemize}
\end{corollary}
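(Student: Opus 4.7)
The plan is to reduce the corollary to an application of \cite{CCP12}*{Proposition 8}, the abstract attractor theorem for possibly discontinuous semigroups. The only hypothesis of that result is the existence of an absorbing set that is precompact in the phase space $H^1(\TT)$; once this is in hand, the conclusions about existence, the $\omega$-limit representation, attraction of bounded sets, and minimality all follow automatically from the abstract machinery.

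To verify the hypothesis, I would invoke Theorem \ref{thm:H32abs}: the ball $B_2$ is an $H^{3/2}$-bounded absorbing set for $S(t)$, and by the compact Sobolev embedding of $H^{3/2}(\TT)$ into $H^1(\TT)$ it is precompact in $H^1(\TT)$. Combined with the semigroup identity $S(t+\tau)=S(t)S(\tau)$ recorded in Proposition \ref{prop:WP}, this places us in the exact framework of \cite{CCP12}.

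With the abstract theorem available, I would then set
\[
\AA := \omega(B_2) = \bigcap_{t \geq 0} \overline{\bigcup_{\tau \geq t} S(\tau) B_2}^{\,H^1},
\]
where the closure is in $H^1(\TT)$, and record four consequences. First, $\AA$ is automatically nonempty and $H^1$-compact. Second, it attracts $B_2$ by the standard $\omega$-limit construction. Third, it attracts every bounded $B\subset H^1$, since $B_2$ absorbs $B$ at some entering time $t_B$ and hence
\[
\dist(S(t+t_B)B,\AA) \leq \dist(S(t)B_2,\AA) \to 0 \quad \text{as } t\to\infty.
\]
Fourth, it is minimal among $H^1$-closed attracting sets, because any such set must contain $\omega(B_2)$. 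The regularity $\AA \subset H^{3/2}(\TT)$ is inherited from the uniform bound \eqref{eq:H32unif}: every $\varphi \in \AA$ is an $H^1$-limit of elements of the form $S(\tau_n)\theta_n$ with $\theta_n \in B_2$ and $\tau_n \to \infty$, each of which has $H^{3/2}$-norm at most $2R_2$, and weak compactness in $H^{3/2}$ together with the $H^1$-convergence identify the limit as an element of $H^{3/2}$ of norm $\leq 2R_2$.

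The main conceptual point, and the only reason a classical attractor theorem (e.g.\ \cites{T3,Hale,SellYou}) cannot be quoted directly, is the lack of an $H^1$-continuity estimate for $S(t)$ at fixed $t>0$; this is precisely the gap bridged by the abstract framework of \cite{CCP12}. Invariance of $\AA$ is intentionally not part of this corollary and will be recovered separately in Section \ref{sub:invariance} using the continuity estimate \cite{CTV13}*{Proposition 5.5} on the smoother absorbing set.
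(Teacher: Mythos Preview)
Your proposal is correct and matches the paper's approach exactly: the paper simply states that the corollary follows from \cite{CCP12}*{Proposition 8} applied to the compact absorbing set $B_2$ of Theorem~\ref{thm:H32abs}, and you have spelled out precisely that reduction together with the supporting details (compact embedding $H^{3/2}\hookrightarrow H^1$, attraction via the absorbing property, and the weak-compactness argument for $\AA\subset H^{3/2}$). Your remark that invariance is deliberately postponed to Section~\ref{sub:invariance} is also exactly in line with the paper's structure.
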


\subsection{Invariance of the attractor}\label{sub:invariance}
To conclude the proof of Theorem \ref{thm:globalattra}, we establish the invariance
of the attractor obtained in Corollary \ref{cor:attr}. To this end, we recall the following continuity
result for $S(t)$.

\begin{proposition}[\cite{CTV13}*{Proposition 5.5}]
For every $t>0$, $S(t):B_2\to H^1$ is Lipschitz-continuous in the topology of $H^1$.
\end{proposition}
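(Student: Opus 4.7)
The plan is to argue as a standard energy estimate for the difference of two solutions, taking full advantage of the fact that both trajectories live in the $H^{3/2}$-bounded set $B_2$ guaranteed by Theorem~\ref{thm:H32abs}. Let $\theta_0^{(1)}, \theta_0^{(2)} \in B_2$, set $\theta_j(t) = S(t)\theta_0^{(j)}$, and let
\[
w = \theta_1-\theta_2, \qquad \v = \RR^\perp w = \u_1-\u_2.
\]
Then $w$ satisfies
\[
\de_t w + \u_1 \cdot \nabla w + \v\cdot \nabla \theta_2 + \kappa \Lambda w = 0, \qquad w(0) = \theta_0^{(1)}-\theta_0^{(2)}.
\]
The aim is to prove a differential inequality of the form
\begin{equation*}
\ddt \|w\|_{H^1}^2 + \kappa \|w\|_{H^{3/2}}^2 \leq C(R_2,\kappa) \|w\|_{H^1}^2,
\end{equation*}
from which Gronwall's lemma immediately gives $\|w(t)\|_{H^1} \leq \e^{C(R_2,\kappa) t/2}\|w(0)\|_{H^1}$, i.e.~Lipschitz continuity on $B_2$ with constant depending only on $t$, $\kappa$ and $R_2$.

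To get that inequality I would test the equation with $\Lambda^2 w$, which produces the coercive term $\kappa \|w\|_{\dot H^{3/2}}^2$ and leaves two nonlinear contributions to control:
\[
I_1 = -\int_{\TT} (\u_1\cdot \nabla w)\,\Lambda^2 w\,\dd x, \qquad I_2 = -\int_{\TT} (\v \cdot \nabla \theta_2)\,\Lambda^2 w\,\dd x.
\]
For $I_1$, using $\nabla\cdot \u_1=0$ one rewrites the term via a commutator $[\Lambda^{1/2},\u_1]\nabla w$ paired with $\Lambda^{3/2} w$, and then estimates via Kato--Ponce type commutator bounds, the Sobolev embedding $H^{1/2}\subset L^4$, and the uniform bound $\|\theta_1\|_{H^{3/2}}\leq 2R_2$. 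For $I_2$, the natural splitting is to move half a derivative onto $\v \cdot \nabla \theta_2$ and integrate by parts to obtain $\int \Lambda^{1/2}(\v\cdot \nabla\theta_2)\,\Lambda^{3/2} w\,\dd x$; by the fractional product rule and the boundedness of the Riesz transform, this is controlled by $\|\theta_2\|_{H^{3/2}} \|w\|_{H^1}\|w\|_{H^{3/2}}$. Both contributions can then be absorbed into the dissipative term using Young's inequality at the price of a factor $C(R_2,\kappa)\|w\|_{H^1}^2$, which is exactly what is needed.

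The main obstacle is the nonlinear term $I_2$: one has $\nabla\theta_2 \in H^{1/2}$ only, and $\v = \RR^\perp w \in H^1$, so the product $\v \cdot \nabla \theta_2$ just barely lives in a space where one can afford the extra half-derivative needed to pair with $\Lambda^{3/2}w$. This is the precise reason why one needs the solutions to be in the $H^{3/2}$ absorbing set $B_2$ (rather than merely $H^1$), and it is why the proposition is stated with $B_2$ as its domain. Once the two commutator/product estimates outlined above are carried out (essentially as in~\cite{CTV13}*{Prop.~5.5}, which we invoke), the Gronwall step is routine and the Lipschitz bound follows with an explicit constant depending only on $R_2$, $\kappa$ and $t$.
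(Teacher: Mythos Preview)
The paper does not prove this proposition; it is simply quoted from \cite{CTV13}*{Proposition 5.5} and used as a black box in Section~\ref{sub:invariance}. So there is no in-paper proof to compare against, and your sketch is essentially the standard energy argument that \cite{CTV13} carries out.

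One point in your outline deserves tightening. Your claimed bound for $I_2$, namely
\[
\left|\int_{\TT}\Lambda^{1/2}(\v\cdot\nabla\theta_2)\,\Lambda^{3/2}w\,\dd x\right|\lesssim \|\theta_2\|_{H^{3/2}}\|w\|_{H^1}\|w\|_{H^{3/2}},
\]
does not follow from the fractional Leibniz rule as stated: the ``bad'' half of the product expansion produces a factor $\|\Lambda^{3/2}\theta_2\|_{L^q}$ with $q>2$ (paired with $\|\v\|_{L^p}$, $p<\infty$), and this is not controlled by $\|\theta_2\|_{H^{3/2}}$ alone in two dimensions. Equivalently, if you expand $I_2$ via $\Lambda^2=-\Delta$ and integrate by parts, one of the resulting terms forces either $\|\theta_2\|_{H^2}$ or $\|w\|_{H^2}$ to appear. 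The clean fix is to use the local-in-time bound $\theta_j\in L^2(0,T;H^2)$, uniform on $B_2$, which is a by-product of the $H^{3/2}$ energy inequality in the proof of Theorem~\ref{thm:H32abs}. With this, the offending term is estimated by $C\|\theta_2\|_{H^2}\|w\|_{H^1}^{2-\varepsilon}\|w\|_{H^{3/2}}^{\varepsilon}$ for some small $\varepsilon>0$, and after Young's inequality the Gronwall factor involves $\int_0^t\|\theta_2(\tau)\|_{H^2}^{q}\,\dd\tau$ for some $q<2$, which is finite and bounded in terms of $R_2$, $\kappa$, and $t$. With this adjustment your argument closes and yields the stated Lipschitz continuity.
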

In other words, the restriction of $S(t)$ to the regular absorbing set $B_2\subset H^{3/2}$
is a continuos map. It turns out that this suffices to complete the proof of Theorem \ref{thm:globalattra}.
\begin{proposition}\label{prop:attr}
The global attractor $\AA$ of $S(t)$ is fully invariant, namely
$$
S(t)\AA=\AA, \qquad \forall t\geq0.
$$
Moreover, $\AA$ is maximal in the class of $H^1$-bounded invariant sets.
\end{proposition}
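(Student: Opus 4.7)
The plan is to combine the characterization $\AA = \omega(B_2)$ from Corollary~\ref{cor:attr} with the continuity of $S(t)$ on $B_2$ stated in the preceding proposition, together with the compact embedding $H^{3/2} \hookrightarrow H^1$ which makes $B_2$ precompact in $H^1$.

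First, for the forward inclusion $S(t)\AA \subset \AA$, I would take any $x \in \AA$ and write $x = \lim_n S(t_n)x_n$ in $H^1$, for some sequence $x_n \in B_2$ and times $t_n \to \infty$, using the $\omega$-limit set characterization. Since $B_2$ is itself absorbing, for $n$ large we have $y_n := S(t_n)x_n \in B_2$, and by continuity of $S(t): B_2 \to H^1$ we get $S(t)y_n \to S(t)x$ in $H^1$. But $S(t)y_n = S(t+t_n)x_n$, which is again of the form that produces an element of $\omega(B_2) = \AA$. Hence $S(t)x \in \AA$.

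For the reverse inclusion $\AA \subset S(t)\AA$, fix $t \geq 0$ and $x \in \AA$, and again choose $t_n \to \infty$ and $x_n \in B_2$ with $S(t_n)x_n \to x$ in $H^1$. For $n$ sufficiently large so that $t_n > t$ and $S(t_n - t)x_n \in B_2$ (using that $B_2$ absorbs itself), set $z_n := S(t_n - t)x_n \in B_2$. Since $B_2 \subset H^{3/2}$ is bounded in $H^{3/2}$ and the embedding $H^{3/2} \hookrightarrow H^1$ is compact, a subsequence $z_{n_k}$ converges in $H^1$ to some $y$; by construction $y \in \omega(B_2) = \AA$. Applying the continuity of $S(t)$ restricted to $B_2$ along this subsequence yields $S(t)z_{n_k} \to S(t)y$ in $H^1$, while $S(t)z_{n_k} = S(t_{n_k})x_{n_k} \to x$. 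Therefore $x = S(t)y \in S(t)\AA$.

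For the maximality statement, let $\mathcal{B} \subset H^1$ be any bounded invariant set, so that $S(t)\mathcal{B} = \mathcal{B}$ for every $t \geq 0$. Property (iii) of Corollary~\ref{cor:attr} gives $\dist(\mathcal{B}, \AA) = \dist(S(t)\mathcal{B}, \AA) \to 0$ as $t \to \infty$, forcing $\mathcal{B} \subset \AA$ since $\AA$ is closed (being an $\omega$-limit set). The main subtlety is the reverse inclusion $\AA \subset S(t)\AA$: this is where we genuinely need both the $H^{3/2}$-regularity of $B_2$ (to extract an $H^1$-convergent subsequence) and the Lipschitz continuity of $S(t)$ on $B_2$; without the regularization provided by Theorem~\ref{thm:H32abs}, the lack of $H^1$-continuity of $S(t)$ would prevent us from pulling the limit through $S(t)$.
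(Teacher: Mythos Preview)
Your proof is correct and follows essentially the same strategy as the paper: both arguments hinge on the $\omega$-limit characterization $\AA=\omega(B_2)$, the continuity of $S(t)$ restricted to $B_2$, and a compactness extraction to pass the limit through $S(t)$. The only minor differences are that the paper reduces (via \cite{CCP12}*{Proposition 13}) to showing $\AA\subset S(t_0)\AA$ for a single $t_0$ and extracts the convergent subsequence by first approximating $S(t_n-t_0)\eta_n$ with elements of the compact set $\omega(B_2)$, whereas you prove both inclusions directly for every $t$ and use the compactness of $B_2$ itself in $H^1$ (via $H^{3/2}\hookrightarrow H^1$); your route is slightly more self-contained but otherwise identical in spirit.
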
 

\begin{proof}[Proof of Proposition~\ref{prop:attr}]
This proof is classical, so we only sketch here some details. 
Since the global attractor is
the $\omega$-limit set of $B_2$, we have that 
$$
\AA=\omega(B_2)= \big\{\eta\in H^1:  S(t_n)\eta_{n}\to \eta \text{ for some }  \eta_n\in B_2,\ t_n\to \infty   \big\}.
$$
According to \cite{CCP12}*{Proposition 13}, full invariance of $\AA$ follows if one can 
show that $\AA\subset S(t_0)\AA$ for some $t_0>0$. 
Since $B_2$ is absorbing, we may fix $t_0>0$ such that $S(t)B_2\subset B_2$ for all $t\geq t_0$.
Let $\eta\in \omega(B_2)$. Then
there exist $t_n\to\infty$ and $\eta_n\in B_2$ such that
$$
S(t_n)\eta_n\to \eta \qquad \text{as } n\to \infty, \text{ strongly in } H^1.
$$
We may suppose  $t_n\geq 2t_0$ for every $n\in \N$.
Since $\omega(B_2)$ is attracting,
we get in particular
$$
\lim_{n\to\infty}\dist (S(t_n-t_0)B_2,\omega(B_2))=0,
$$
which in turn implies
$$
\lim_{n\to\infty}\left[\inf_{\xi\in \omega(B_2)} \|S(t_n-t_0)\eta_n-\xi\|_{H^1}\right]=0.
$$
So there is a sequence $\xi_n\in \omega(B_2)$ such that 
$$
\lim_{n\to\infty}\left[\|S(t_n-t_0)\eta_n-\xi_n\|_{H^1}\right]=0.
$$
But $\omega(B_2)$ is compact, thus, up to a subsequence, $\xi_n\to\xi\in \omega(B_2)$, which yields at
once 
$$
S(t_n-t_0)\eta_n\to\xi.
$$
Note that $S(t_n-t_0)\eta_n\in B_2$ since $t_n\geq 2t_0$.
Using the continuity of $S(t_0)$ on $B_2$
$$
S(t_0)S(t_n-t_0)\eta_n\to S(t_0)\xi.
$$
On the other hand,
$$
S(t_0)S(t_n-t_0)\eta_n=S(t_n)\eta_n\to\eta.
$$
We conclude that $\eta=S(t_0)\xi$, i.e., $\eta\in S(t_0)\omega(B_2)$. Hence,
$\AA\subset S(t_0)\AA$, and full invariance follows. Once this is established, 
the maximality with respect to invariance is classical. 
\end{proof}

\section*{Acknowledgements}
The work of PC was supported in part by the NSF grants DMS-1209394 and DMS-1265132, 
MCZ was supported in part by an AMS-Simons Travel Award,
while the work of VV was supported in part by the NSF grants DMS-1348193 and DMS-1514771, and an Alfred P. Sloan Research Fellowship.

\begin{bibdiv}
\begin{biblist}
\bib{CV10a}{article}{
   author={Caffarelli, L.A.},
   author={Vasseur, Al.},
   title={Drift diffusion equations with fractional diffusion and the
   quasi-geostrophic equation},
   journal={Ann. of Math. (2)},
   volume={171},
   date={2010},
   pages={1903--1930},
}

\bib{CT03}{article}{
   author={Cao, C.},
   author={Titi, E.S.},
   title={Global well-posedness and finite-dimensional global attractor for
   a 3-D planetary geostrophic viscous model},
   journal={Comm. Pure Appl. Math.},
   volume={56},
   date={2003},
   pages={198--233},
}

\bib{CCP12}{article}{
   author={Chepyzhov, V.V.},
   author={Conti, M.},
   author={Pata, V.},
   title={A minimal approach to the theory of global attractors},
   journal={Discrete Contin. Dyn. Syst.},
   volume={32},
   date={2012},
   pages={2079--2088},
}

\bib{CTV07}{article}{
   author={Chepyzhov, V.V.},
   author={Titi, E.S.},
   author={Vishik, M.I.},
   title={On the convergence of solutions of the Leray-$\alpha$ model to the
   trajectory attractor of the 3D Navier-Stokes system},
   journal={Discrete Contin. Dyn. Syst.},
   volume={17},
   date={2007},
   pages={481--500},
}

\bib{C09}{article}{
   author={Cheskidov, A.},
   title={Global attractors of evolutionary systems},
   journal={J. Dynam. Differential Equations},
   volume={21},
   date={2009},
   pages={249--268},
}

\bib{CCFS}{article}{
   author={Cheskidov, A.},
   author={Constantin, P.},
   author={Friedlander, S.},
   author={Shvydkoy, R.},
   title={Energy conservation and Onsager's conjecture for the Euler
   equations},
   journal={Nonlinearity},
   volume={21},
   date={2008},
   pages={1233--1252},
}

\bib{CD14}{article}{
   author={Cheskidov, A.},
   author={Dai, M.},
   title={The existence of a global attractor for the forced critical surface quasi-geostrophic equation in $L^2$},
   journal = {ArXiv e-prints},
   eprint = {1402.4801},
   date = {2014},
}

\bib{CF06}{article}{
   author={Cheskidov, A.},
   author={Foias, C.},
   title={On global attractors of the 3D Navier-Stokes equations},
   journal={J. Differential Equations},
   volume={231},
   date={2006},
   pages={714--754},
}

\bib{CCW00}{article}{
   author={Constantin, P.},
   author={C\'ordoba, D.},
   author={Wu, Jiahong},
   title={On the critical dissipative quasi-geostrophic equation},
   journal={Indiana Univ. Math. J.},
   volume={50},
   date={2001},
   pages={97--107},
}

\bib{CF85}{article}{
   author={Constantin, P.},
   author={Foias, C.},
   title={Global Lyapunov exponents, Kaplan-Yorke formulas and the dimension
   of the attractors for $2$D Navier-Stokes equations},
   journal={Comm. Pure Appl. Math.},
   volume={38},
   date={1985},
   pages={1--27},
}

\bib{CF88}{book}{
   author={Constantin, P.},
   author={Foias, C.},
   title={Navier-Stokes equations},
   series={Chicago Lectures in Mathematics},
   publisher={University of Chicago Press, Chicago, IL},
   date={1988},
   pages={x+190},
}

\bib{CFT85}{article}{
   author={Constantin, P.},
   author={Foias, C.},
   author={Temam, R.},
   title={Attractors representing turbulent flows},
   journal={Mem. Amer. Math. Soc.},
   volume={53},
   date={1985},
   pages={vii+67},
}

\bib{CFT88}{article}{
   author={Constantin, P.},
   author={Foias, C.},
   author={Temam, R.},
   title={On the dimension of the attractors in two-dimensional turbulence},
   journal={Phys. D},
   volume={30},
   date={1988},
   pages={284--296},
}

\bib{CTV12}{article}{
   author={Constantin, P.},
   author={Tarfulea, A.},
   author={Vicol, V.},
   title={Absence of anomalous dissipation of energy in forced two
   dimensional fluid equations},
   journal={Arch. Ration. Mech. Anal.},
   volume={212},
   date={2014},
   pages={875--903},
}

\bib{CTV13}{article}{
   author={Constantin, P.},
   author={Tarfulea, A.},
   author={Vicol, V.},
   title = {Long time dynamics of forced critical SQG},
   journal={Comm. Math. Phys.},
   volume={335},
   date = {2014},
   pages={93--141},
}

\bib{CV12}{article}{
   author={Constantin, P.},
   author={Vicol, V.},
   title={Nonlinear maximum principles for dissipative linear nonlocal
   operators and applications},
   journal={Geom. Funct. Anal.},
   volume={22},
   date={2012},
   pages={1289--1321},
}

\bib{CC04}{article}{
   author={C{\'o}rdoba, A.},
   author={C{\'o}rdoba, D.},
   title={A maximum principle applied to quasi-geostrophic equations},
   journal={Comm. Math. Phys.},
   volume={249},
   date={2004},
   pages={511--528},
}

\bib{CZ13}{article}{
   author={Coti Zelati, M.},
   title={On the theory of global attractors and Lyapunov functionals},
   journal={Set-Valued Var. Anal.},
   volume={21},
   date={2013},
   pages={127--149},
}

\bib{CZG15}{article}{
   author={Coti Zelati, M.},
   author={Gal, C.G.},
   title={Singular Limits of Voigt Models in Fluid Dynamics},
   journal={J. Math. Fluid Mech.},
   volume={17},
   date={2015},
   pages={233--259},
}

\bib{CZK15}{article}{
   author={Coti Zelati, M.},
   author={Kalita, P.},
   title={Minimality properties of set-valued processes and their pullback
   attractors},
   journal={SIAM J. Math. Anal.},
   volume={47},
   date={2015},
   pages={1530--1561},
}

\bib{CZV14}{article}{
   author={Coti Zelati, M.},
   author={Vicol, V.},
   title = {On the global regularity for the supercritical SQG equation},
   journal = {ArXiv e-prints},
   eprint = {1410.3186},
   date = {2014},
}

\bib{Dong10}{article}{
	author = {Dong, H.},
	title = {Dissipative quasi-geostrophic equations in critical Sobolev spaces: smoothing effect and global well-posedness},
	journal = {Discrete Contin. Dyn. Syst.},
	volume = {26},
	date = {2010},
	pages = {1197--1211},
}

\bib{FPTZ12}{article}{
   author={Farhat, A.},
   author={Panetta, R.L.},
   author={Titi, E.S.},
   author={Ziane, M.},
   title={Long-time behavior of a two-layer model of baroclinic
   quasi-geostrophic turbulence},
   journal={J. Math. Phys.},
   volume={53},
   date={2012},
   pages={115603, 24},
}

\bib{FHT02}{article}{
   author={Foias, C.},
   author={Holm, D.D.},
   author={Titi, E.S.},
   title={The three dimensional viscous Camassa-Holm equations, and their
   relation to the Navier-Stokes equations and turbulence theory},
   journal={J. Dynam. Differential Equations},
   volume={14},
   date={2002},
   pages={1--35},
}

\bib{FJK96}{article}{
   author={Foias, C.},
   author={Jolly, M.S.},
   author={Kukavica, I.},
   title={Localization of attractors by their analytic properties},
   journal={Nonlinearity},
   volume={9},
   date={1996},
   pages={1565--1581},
}

\bib{FMRT01}{book}{
   author={Foias, C.},
   author={Manley, O.},
   author={Rosa, R.},
   author={Temam, R.},
   title={Navier-Stokes equations and turbulence},
   series={Encyclopedia of Mathematics and its Applications},
   volume={83},
   publisher={Cambridge University Press, Cambridge},
   date={2001},
   pages={xiv+347},
}

\bib{FPV09}{article}{
   author={Friedlander, S.},
   author={Pavlovi{\'c}, N.},
   author={Vicol, V.},
   title={Nonlinear instability for the critically dissipative
   quasi-geostrophic equation},
   journal={Comm. Math. Phys.},
   volume={292},
   date={2009},
   pages={797--810},
}

\bib{GT97}{article}{
   author={Gibbon, J.D.},
   author={Titi, E.S.},
   title={Attractor dimension and small length scale estimates for the
   three-dimensional Navier-Stokes equations},
   journal={Nonlinearity},
   volume={10},
   date={1997},
   pages={109--119},
}

\bib{Hale}{book}{
   author={Hale, J.K.},
   title={Asymptotic behavior of dissipative systems},
   publisher={American Mathematical Society},
   place={Providence, RI},
   date={1988},
}

\bib{IMT04}{article}{
   author={Ilyin, A. A.},
   author={Miranville, A.},
   author={Titi, E. S.},
   title={Small viscosity sharp estimates for the global attractor of the
   2-D damped-driven Navier-Stokes equations},
   journal={Commun. Math. Sci.},
   volume={2},
   date={2004},
   pages={403--426},
}

\bib{JT92}{article}{
   author={Jones, D.A.},
   author={Titi, E.S.},
   title={On the number of determining nodes for the $2$D Navier-Stokes
   equations},
   journal={J. Math. Anal. Appl.},
   volume={168},
   date={1992},
   pages={72--88},
}

\bib{Ju07}{article}{
   author={Ju, N.},
   title={Dissipative 2D quasi-geostrophic equation: local well-posedness, global regularity and similarity solutions},
   journal={Indiana Univ. Math. J.},
   volume={56},
   date={2007},
   pages={187--206},
}

\bib{JT15}{article}{
   author={Ju, N.},
   author={Temam, R.},
   title={Finite dimensions of the global attractor for 3D primitive
   equations with viscosity},
   journal={J. Nonlinear Sci.},
   volume={25},
   date={2015},
   pages={131--155},
}

\bib{KT09}{article}{
   author={Kalantarov, V.K.},
   author={Titi, E.S.},
   title={Global attractors and determining modes for the 3D
   Navier-Stokes-Voight equations},
   journal={Chin. Ann. Math. Ser. B},
   volume={30},
   date={2009},
   pages={697--714},
}

\bib{KN09}{article}{
   author={Kiselev, A.},
   author={Nazarov, F.},
   title={A variation on a theme of Caffarelli and Vasseur},
   journal={Zap. Nauchn. Sem. S.-Peterburg. Otdel. Mat. Inst. Steklov. (POMI)},
   volume={370},
   date={2009},
   pages={58--72, 220},
}

\bib{KNV07}{article}{
   author={Kiselev, A.},
   author={Nazarov, F.},
   author={Volberg, A.},
   title={Global well-posedness for the critical 2D dissipative quasi-geostrophic equation},
   journal={Invent. Math.},
   volume={167},
   date={2007},
   pages={445--453},
}

\bib{Miu06}{article}{
   author={Miura, H.},
   title={Dissipative quasi-geostrophic equation for large initial data in
   the critical Sobolev space},
   journal={Comm. Math. Phys.},
   volume={267},
   date={2006},
   pages={141--157},
}

\bib{PZ}{article}{
   author={Pata, V.},
   author={Zelik, S.},
   title={A result on the existence of global attractors for semigroups of
   closed operators},
   journal={Commun. Pur. Appl. Anal.},
   volume={6},
   date={2007},
   pages={481--486},
}

\bib{Sell96}{article}{
   author={Sell, G.R.},
   title={Global attractors for the three-dimensional Navier-Stokes
   equations},
   journal={J. Dynam. Differential Equations},
   volume={8},
   date={1996},
   pages={1--33},
}

\bib{SellYou}{book}{
   author={Sell, G.R.},
   author={You, Y.},
   title={Dynamics of evolutionary equations},
   publisher={Springer-Verlag, New York},
   date={2002},
}

\bib{Sil10a}{article}{
   author={Silvestre, L.},
   title={Eventual regularization for the slightly supercritical
   quasi-geostrophic equation},
   journal={Ann. Inst. H. Poincar\'e Anal. Non Lin\'eaire},
   volume={27},
   date={2010},
   pages={693--704},
}

\bib{T3}{book}{
   author={Temam, R.},
   title={Infinite-dimensional dynamical systems in mechanics and physics},
   publisher={Springer-Verlag},
   place={New York},
   date={1997},
}

\bib{WT13}{article}{
   author={Wang, M.},
   author={Tang, Y.},
   title={On dimension of the global attractor for 2D quasi-geostrophic
   equations},
   journal={Nonlinear Anal. Real World Appl.},
   volume={14},
   date={2013},
   pages={1887--1895},
}

\bib{Wu07}{article}{
   author={Wu\ ,J.},
   title={Existence and uniqueness results for the 2-D dissipative quasi-geostrophic equation},
   journal={Nonlinear Analysis},
   volume={67},
   date={2007},
   pages={3013--3036},
}

\bib{Z97}{article}{
   author={Ziane, M.},
   title={Optimal bounds on the dimension of the attractor of the
   Navier-Stokes equations},
   journal={Phys. D},
   volume={105},
   date={1997},
   pages={1--19},
}

\end{biblist}
\end{bibdiv}

\end{document}